\documentclass[a4paper,10pt]{article}
\usepackage[top = 1in, bottom = 1in, left=0.8in, right=0.8in]{geometry}

\usepackage{graphicx} 
\usepackage[utf8]{inputenc}
\usepackage{amsfonts}
\usepackage{hyperref}

\usepackage{amsmath}
\usepackage{amsthm}
\usepackage{tikz}
\usetikzlibrary{arrows.meta}
\usepackage{enumerate}
\usepackage{caption}
\usepackage{adjustbox}
\usepackage{appendix}
\usepackage{algpseudocode}

\newtheorem{thm}{Theorem}[section]
\newtheorem{cor}[thm]{Corollary}
\newtheorem{prop}[thm]{Proposition}
\newtheorem{lem}[thm]{Lemma}
\newtheorem{quest}[thm]{Question}
\newtheorem{claim}[thm]{Claim}

\theoremstyle{definition}
\newtheorem{defn}[thm]{Definition}
\newtheorem{obs}[thm]{Observation}

\theoremstyle{remark}
\newtheorem{rem}[thm]{Remark}

\makeatletter
\let\c@equation\c@thm
\makeatother

\newcommand{\startvx}{a}
\newcommand{\evx}{b}
\newcommand{\firstskipvx}{x}
\newcommand{\secondskipvx}{y}
\newcommand{\firsthelper}{v}
\newcommand{\secondhelper}{w}
\newcommand{\menge}[1]{C_{#1}}
\newcommand{\mengeeingeschrankt}[1]{A_{#1}}
\newcommand{\ivfirst}[1] {\mathcal{C}(#1)}
\newcommand{\ivsec}[1] {\mathcal{A}(#1)}
\newcommand{\configuration}{c}
\newcommand{\type}[2]{t_{#1}(#2)}
\newcommand{\shorttype}[1]{t_{#1}}
\newcommand{\restriction}[1]{\phi_#1}

\newcommand{\engquerkante}{lifting edge}
\newcommand{\cube}[1]{\mathcal{Q}_{#1}^3}
\newcommand{\kuniformcube}[1]{\mathcal{Q}_{#1}^k}

\newcommand{\abr}[1]{\langle #1 \rangle}

\newcommand{\lpskipped}{almost Hamilton path}
\newcommand{\allnc}{\texttt{createNormalisedConfigs}}
\newcommand{\setallnc}{N}
\newcommand{\coveredc}{\texttt{createWitnessedConfigs}}
\newcommand{\notcoveredc}{\texttt{createMissingConfigs}}
\newcommand{\setallcoveredc}{L}
\newcommand{\setallnotcoveredc}{M}

\usepackage{tikz}
\usetikzlibrary{decorations.pathreplacing}
\usetikzlibrary{math}
\usetikzlibrary{backgrounds}
\usetikzlibrary{calc}

\title{Loose Hamilton paths in the 3-uniform cube hypergraph}

\author{Oliver Cooley, Johannes Machata, Matija Pasch \footnote{\texttt{cooley@math.lmu.de, j.machata@campus.lmu.de, matijapasch@gmx.de}}}

\date{}

\begin{document}

\maketitle

\abstract{It is well-known that the $d$-dimensional hypercube contains a Hamilton cycle for $d\ge 2$. In this paper we address the analogous problem in the $3$-uniform cube hypergraph, a $3$-uniform analogue of the hypercube: for simple parity reasons, the $3$-uniform cube hypergraph can never admit a loose Hamilton cycle in any dimension, so we do the next best thing and consider loose Hamilton paths, and determine for which dimensions these exist.}

\section{Introduction}\label{sec: introduction}
\subsection{Motivation: The graph case}

The hypercube $Q_d$ of dimension $d$ is one of the most intensively studied objects in graph theory; it is defined as the graph on vertex set $\{0,1\}^d$, i.e.\ whose vertices are $01$-sequences of length $d$, in which two vertices are adjacent if the corresponding sequences differ in precisely one coordinate. It is a classic exercise in elementary graph theory to show that, for any integer $d\ge 2$, this graph contains a Hamilton cycle, i.e.\ a cycle containing all of the vertices.
Indeed, many far stronger results have been shown, for example regarding the number of Hamilton cycles~\cite{Clark97,DG75,Douglas77,FS09,Mollard88}, Hamilton cycles containing fixed matchings~\cite{AAAHST15,Fink07,Gregor09} and the robustness of Hamiltonicity against edge-deletion~\cite{CL91,CEGKO21}. 

Perhaps the easiest proof that the hypercube contains a Hamilton cycle is by induction on $d$, and utilises the fact that $Q_d$ can be constructed (up to isomorphism) from two copies of $Q_{d-1}$
by adding an edge between each pair of corresponding vertices from each copy.
Now for $d=2$ the graph is simply a cycle of length $4$, while for larger $d$ we can take identical Hamilton cycles in each copy of $Q_{d-1}$, delete two corresponding edges, one from each cycle, and add the matching edges between the endpoints of the deleted edges.

The aim of this paper is to examine possible extensions of this result to $3$-uniform hypergraphs.
\footnote{An extended version of this work appears in~\cite{JMthesis}: this contains some additional figures and more detailed explanations as well as some further results which do not appear in this paper, but does not include the pseudo-code in Appendix~\ref{app:code}.}

\subsection{Hypergraphs}

Given an integer $k\ge 2$, a \emph{$k$-uniform hypergraph} is a pair $(V,E)$, where $E \subseteq \binom{V}{k}$ is a set of unordered $k$-tuples of elements of $V$. We refer to the elements of $V$ as \emph{vertices} and the elements of $E$ as \emph{edges}. Thus a $2$-uniform hypergraph is simply a graph. In this paper, we focus on $3$-uniform hypergraphs, which for brevity we will simply refer to as hypergraphs.

There is a natural hypergraph analogue of the hypercube of dimension~$d$.

\begin{defn}\label{def: 3-uniform hypercube}
    Given $d\in \mathbb{N}_0$, the \emph{cube hypergraph of dimension $d$}, denoted $\cube{d}$, is the hypergraph on vertex set $V_d:=\{0,1,2\}^d$ whose edges are all unordered triples of (pairwise distinct) sequences $\{a,b,c\}$ which are identical on $d-1$ coordinates.
\end{defn}
Note that the possible values $0,1,2$ for the final remaining coordinate must each be taken by one of $a,b,c$.

\begin{figure}[h]
    \centering
    \includegraphics{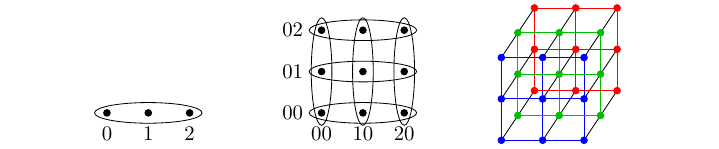}
    \captionsetup{width=0.88\textwidth, font=small}
    \caption{The cube hypergraphs $\cube{1},\cube{2}$ and $\cube{3}$. Note that in dimension 3, for clarity we use straight lines rather than ellipses to represent edges, and have omitted the vertex labels. Nevertheless, despite the optics, this represents a hypergraph rather than a graph; there is no ``middle'' or ``end'' vertex in an edge.}
    \label{fig:Cubesdim123}
\end{figure}

Of course, one can naturally extend this definition further to \emph{$k$-uniform cube hypergraphs}, which were first introduced by Burosch and Ceccherini, who studied isomorphic subgraphs~\cite{BC95} and characterisations~\cite{BC96} of cube hypergraphs, while Dvo\v{r}\'{a}k and Valla studied the automorphism groups of cube hypergraphs~\cite{DV21}.
Furthermore, Wang, Meng and Tian studied the vertex- and edge-connectivity of cube hypergraphs~\cite{WJT22}.
Otherwise, we are not aware of any further study of these hypergraphs and many very natural questions remain open. In particular, we focus on Hamilton paths and cycles.

In ($3$-uniform) hypergraphs there are several potential ways to generalise the notion of a path or cycle. In this paper we consider loose paths.

\begin{defn}\label{def: loosepath}
    Given $\ell \in \mathbb{N}_0$,
    a \emph{loose path of length $\ell$} in a hypergraph consists of a sequence of distinct vertices $v_0,v_1,\ldots{},v_{2\ell}$
    and a sequence of distinct edges $e_1,e_2,\ldots ,e_\ell$
    such that $e_i = \{v_{2i-2},v_{2i-1},v_{2i}\}$ for $i\in \{1,\ldots,\ell\}$.

    A \emph{loose cycle} of length $\ell$ is defined identically except that $v_0=v_{2\ell}$, and we demand that $\ell \neq 0$.
\end{defn}

We refer to $v_0$ and $v_{2\ell}$ as the \emph{endpoints} of the loose path, and speak of a loose path \emph{from $v_0$ to $v_{2\ell}$}.

Note that a loose path intrinsically comes with an order of its vertices, and therefore with a direction. However, with a slight abuse of terminology we often identify a path with its edge set, in which case a path and its reverse orientation become identical. Furthermore, with this identification, switching the order of the first two vertices or the last two vertices does not change the path -- thus precisely the same set of edges also gives rise to a loose path between either of $\{v_0,v_1\}$ and either of $\{v_{2\ell-1},v_{2\ell}\}$.

We briefly discuss other potential generalisations of paths and cycles in Section~\ref{sec: concluding remarks}.

\subsection{Main result}
It is now natural to ask whether $\cube{d}$ contains a \emph{loose Hamilton cycle}, i.e.\ a loose cycle containing all vertices. A moment's thought shows that this can never occur: a loose cycle necessarily contains an even number of vertices, while $\cube{d}$ contains $3^d$ vertices, which is always an odd number.

Thus the natural generalisation of the graph result mentioned above is obviously false.
On the other hand, a loose path contains an odd number of vertices, so we may weaken the question to ask whether a \emph{loose Hamilton path} exists, i.e.\ a loose path containing all vertices.
We even consider the following stronger property: we say that a hypergraph is \emph{loose Hamilton connected} if for any two distinct vertices $a,b$ there exists a loose Hamilton path from $a $ to $b$.
Our main result is the following.
 
\begin{thm}\label{thm:lHc}
The cube hypergraph $\cube{d}$ is loose Hamilton connected for $d\ge 4$.
\end{thm}

It is a trivial check that $\cube{d}$ is loose Hamilton connected for $d=1$ (and even degenerately for $d=0$), but does not even contain \emph{any} loose Hamilton paths for $d=2$. A slightly harder, but still elementary exercise is to show that for $d=3$ the hypergraph $\cube{d}$ does not contain a loose Hamilton path -- a proof of this fact can be found in~\cite{JMthesis}. Thus Theorem~\ref{thm:lHc} covers all of the difficult cases.

\subsection{Proof outline}\label{subsec: proof outline}

 The first natural approach to proving Theorem~\ref{thm:lHc} is an analogous proof to the graph case:
 split the hypergraph $\cube{d}$ into three ``layers'', each isomorphic to $\cube{d-1}$, find a loose Hamilton path in each layer, and then connect the loose paths in each layer using additional edges.
 However, the connecting edges used to jump between two layers also contain a vertex in the third layer, which must then be avoided by the path in that layer.
 
 To take account of this, we consider a stronger induction hypothesis:
 for every four distinct vertices $\startvx, \evx, \firstskipvx, \secondskipvx$ in $V_d$ there is a loose path from $\startvx$ to $\evx$ omitting $\firstskipvx \text{ and } \secondskipvx$ and covering all other vertices. We call such an ordered $4$-tuple of vertices a \textit{configuration} and we call a configuration \textit{covered} if there exists a loose path satisfying these conditions. One might hope to prove that for $d \geq 4$, we can cover every configuration. Unfortunately, this is false for some smaller dimensions, in particular for $d=4$.

 Therefore,
in Section $\ref{sec: induction hypothesis}$ we introduce the set $\menge{d}$, which consists of five types of configurations in $\cube{d}$. We call these types $\type{1}{d}, \type{2}{d},\type{3}{d}, \type{4}{d}\text{ and } \type{5}{d}$ and introduce the hypothesis:\\
\indent \parbox{14cm}{
\begin{itemize}
    \item[$\ivfirst{d}$:]
For every configuration $(\startvx,\evx,\firstskipvx,\secondskipvx) \in \menge{d}$, there exists a loose path in $\cube{d}$ from $\startvx$ to $\evx$ omitting $\firstskipvx$ and $\secondskipvx$ and covering every other vertex in $V_d$. 
\end{itemize}
}\\
Introducing this condition allows us to use the omitted vertices $x,y$ to take account of the fact that some vertices within a layer are already used by edges which connect layers. 

One might now hope to prove inductively that $\ivfirst{d}$ holds for all $d \geq 4$.
Unfortunately, even this turns out to be false (though true for $d \geq 5$.). Specifically, there are precisely four uncovered configurations in $\menge{4}$ (up to symmetries). This means the base case of our induction would have to be $d=5$. Unfortunately, a computer search for $d=5$ proved to be unmanageable in testing every configuration in $\menge{5}$. 

To circumvent this problem, we modify our induction such that it also holds for $d=4$, which is more tractable.\footnote{An alternative approach is to consider $\menge{5}$ manually,
and find paths to cover all the configurations. This is indeed manageable, but slightly more involved than the approach we take in this paper.
}
More precisely, in Section~$\ref{sec: induction hypothesis}$ we introduce the types $\restriction{1}, \restriction{2}, \restriction{3}$ and $\restriction{4}$ to further restrict the set $\menge{d}$ -- the new restricted set we call $\mengeeingeschrankt{d}$. In particular, $\mengeeingeschrankt{4}$ does not contain any of the 
uncovered configurations in $\menge{4}$. We now consider the following property:\\
\indent \parbox{14cm}{
\begin{itemize}
    \item[$\ivsec{d}$:]
For every configuration $(\startvx,\evx,\firstskipvx,\secondskipvx) \in \mengeeingeschrankt{d}$, there exists a loose path from $\startvx$ to $\evx$ omitting $\firstskipvx$ and $\secondskipvx$ and covering every other vertex in $V_d$.
\end{itemize}
}\\
In Section~$\ref{sec:configtypes}$ we will formally define $\menge{d}$ and $\mengeeingeschrankt{d}$. From these definitions, the following observation is immediate.
\begin{obs}\label{observationteilmenge}
    $\mengeeingeschrankt{d} \subseteq \menge{d} \ for\ all\ d \geq 4 $.
\end{obs}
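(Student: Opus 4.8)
The plan is to obtain the inclusion directly from the definitions of $\menge{d}$ and $\mengeeingeschrankt{d}$ that will be given in Section~\ref{sec:configtypes}. The guiding principle, already signalled by the terminology of the proof outline, is that $\mengeeingeschrankt{d}$ is the set obtained by \emph{restricting} $\menge{d}$: a configuration should lie in $\mengeeingeschrankt{d}$ precisely when it lies in $\menge{d}$ and, in addition, satisfies each of the restriction conditions encoded by $\restriction{1},\restriction{2},\restriction{3},\restriction{4}$. Under this reading the inclusion is immediate, since membership in $\mengeeingeschrankt{d}$ carries membership in $\menge{d}$ as one of its defining clauses.

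Concretely, I would fix an arbitrary $d \geq 4$ and an arbitrary configuration $(\startvx,\evx,\firstskipvx,\secondskipvx) \in \mengeeingeschrankt{d}$, and then simply unwind the definition to read off that $(\startvx,\evx,\firstskipvx,\secondskipvx) \in \menge{d}$. Since $d$ and the configuration were arbitrary, this establishes $\mengeeingeschrankt{d} \subseteq \menge{d}$ for every $d \geq 4$. No argument about loose paths is needed; the statement concerns only the containment of two explicitly defined finite families of $4$-tuples of vertices.

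The one point that could demand genuine (if still routine) work is if $\menge{d}$ and $\mengeeingeschrankt{d}$ are instead presented as independent unions of types -- $\menge{d}$ as the union of the five types $\type{1}{d},\dots,\type{5}{d}$ and $\mengeeingeschrankt{d}$ as a family built from $\restriction{1},\dots,\restriction{4}$ -- rather than one being literally carved out of the other. In that case the proof becomes a finite type-by-type check: for each type used to define $\mengeeingeschrankt{d}$, one verifies that every configuration of that type also meets the membership condition of one of $\type{1}{d},\dots,\type{5}{d}$. I expect this bookkeeping to be the main (and only) obstacle, and even then a mild one, since the restrictions $\restriction{i}$ are designed only to \emph{shrink} the configuration set -- in particular to excise the four uncovered configurations of $\menge{4}$ -- and never to admit anything new. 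Either way the observation reduces to inspecting the definitions, which is why it can fairly be called immediate.
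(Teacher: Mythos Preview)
Your proposal is correct and matches the paper's own treatment: the sets are indeed defined as unions of types, and the inclusion follows because each $\restriction{i}$ explicitly requires the configuration to be of type $\type{i}{d}$ (or $\type{2}{d}/\type{5}{d}$ when $i=2$), exactly as in your anticipated second scenario. The paper records this in a single sentence after the definitions, just as you suggest.
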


The following is an immediate corollary of Observation~\ref{observationteilmenge} and the definitions of the events $\ivfirst{d}$ and $\ivsec{d}$.

\begin{cor} \label{CausA}
    $\ivfirst{d} \implies \ivsec{d}$ for all $ d \geq 5$.\footnote{Note that formally this statement would be true even without the condition $d\geq 5$. We include this condition because it turns out that $\ivfirst{d}$ is only true, and therefore the implication is only a non-empty statement, if $d\geq 5$.
}\qed
\end{cor}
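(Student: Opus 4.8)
The plan is to recognise that this implication is purely logical, resting only on the set-containment already recorded in Observation~\ref{observationteilmenge} together with the fact that $\ivfirst{d}$ and $\ivsec{d}$ are universally quantified statements ranging over $\menge{d}$ and $\mengeeingeschrankt{d}$ respectively, with \emph{identical} conclusions asserted for each individual configuration. Since the two properties differ only in the domain of quantification, and that domain shrinks when passing from $\menge{d}$ to $\mengeeingeschrankt{d}$, the statement over the larger domain immediately entails the one over the smaller.

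Concretely, I would first unpack $\ivfirst{d}$ as the assertion that \emph{for every} configuration $(\startvx,\evx,\firstskipvx,\secondskipvx)\in\menge{d}$ there is a loose path in $\cube{d}$ from $\startvx$ to $\evx$ omitting $\firstskipvx$ and $\secondskipvx$ and covering every remaining vertex of $V_d$. Next I would fix an arbitrary configuration $(\startvx,\evx,\firstskipvx,\secondskipvx)\in\mengeeingeschrankt{d}$; by Observation~\ref{observationteilmenge} this configuration also lies in $\menge{d}$, so the hypothesis $\ivfirst{d}$ supplies precisely the loose path demanded by $\ivsec{d}$ for this configuration. As the configuration was arbitrary, this establishes the conclusion of $\ivsec{d}$, completing the argument.

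There is essentially no obstacle here: the only content is the containment $\mengeeingeschrankt{d}\subseteq\menge{d}$ supplied by Observation~\ref{observationteilmenge}, and the fact that a $\forall$-statement over a set restricts to any subset. The single point worth flagging is that the hypothesis $d\geq 5$ plays no role in the logic itself; as the footnote indicates, it is included only because $\ivfirst{d}$ (and hence a non-vacuous implication) first becomes available at $d=5$.
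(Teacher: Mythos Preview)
Your proposal is correct and matches the paper's approach exactly: the paper likewise states that the corollary is an immediate consequence of Observation~\ref{observationteilmenge} and the definitions of $\ivfirst{d}$ and $\ivsec{d}$, and marks it with \qed\ without further argument. Your added remark that the bound $d\geq 5$ is logically superfluous is also precisely what the footnote records.
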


Next we address the base case with the following proposition.
\begin{prop}\label{dim4}
    $\ivsec{4}$ holds.
\end{prop}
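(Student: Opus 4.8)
The plan is to treat $\ivsec{4}$ as a finite verification, since $d=4$ is exactly the boundary case where the layer-splitting induction has no lower dimension to lean on. The natural base for such a reduction would be $\cube{3}$, but by Proposition~\ref{case d=2 and d=3}(ii) this is not even loose path-Hamiltonian, so there is no way to pass from $d=4$ to $d=3$ and one must establish $\ivsec{4}$ directly. The statement is a conjunction over the finitely many configurations in $\mengeeingeschrankt{4}$, each asserting the existence of a loose path on $79$ of the $81$ vertices (i.e.\ of length $39$) from $\startvx$ to $\evx$ avoiding $\firstskipvx,\secondskipvx$. In principle this is decidable by exhaustive search; the whole difficulty is to make the search small enough to carry out, and to be sure the restrictions $\restriction{1},\dots,\restriction{4}$ really do remove the obstructions.

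First I would cut the problem down using the symmetries of $\cube{4}$. Independently permuting the three values $\{0,1,2\}$ within each of the four coordinates, and permuting the four coordinates, are automorphisms of $\cube{4}$; the group they generate is the wreath product $S_3 \wr S_4$ of order $6^4\cdot 24 = 31104$. Covering is invariant under this action, since applying an automorphism $\sigma$ to a covering loose path for $(\startvx,\evx,\firstskipvx,\secondskipvx)$ yields a covering path for $(\sigma\startvx,\sigma\evx,\sigma\firstskipvx,\sigma\secondskipvx)$. Combined with the path symmetries noted after Definition~\ref{def: loosepath} (reversing the path, and swapping either endpoint with its neighbouring vertex), this lets me replace each type $\type{i}{4}$ in $\mengeeingeschrankt{4}$ by a short list of orbit representatives, so that $\ivsec{4}$ reduces to covering each representative.

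For the representatives I would still use the three-layer decomposition along one coordinate: each layer is a copy of $\cube{3}$ on $27$ vertices, the horizontal edges live inside layers, and the lifting edges cross all three layers at once. The route is assembled from long near-spanning loose subpaths inside the layers, stitched together by a small controlled number of lifting edges. Crucially, since each lifting edge also consumes a vertex in the third layer, the two omitted vertices $\firstskipvx,\secondskipvx$ and the vertices eaten by the junctions must be booked together, so that the leftover uncovered vertex in each layer is always one we are permitted to drop. Because $\cube{3}$ admits no spanning loose path, these intra-layer pieces are deliberately only near-spanning and the junctions must be placed with care; this bookkeeping is the technical heart of each case. Whatever path I produce, by hand or by computer, I finally check that it is a genuine loose path: all vertices distinct, consecutive edges meeting in exactly one vertex, endpoints $\startvx,\evx$, and $\firstskipvx,\secondskipvx$ absent.

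The main obstacle is the tightness of $d=4$. In the larger set $\menge{4}$ there are, up to symmetry, exactly four genuinely uncovered configurations, so no uniform construction can possibly work and the restrictions $\restriction{1},\dots,\restriction{4}$ must be used in an essential way; part of the proof is therefore to verify that $\mengeeingeschrankt{4}$ excludes all four bad orbits. I expect the representatives lying closest to these excluded configurations to be the delicate ones, requiring the most hand-tuned routing (or the heaviest search), while the remaining cases should succumb to a fairly mechanical layer construction. The overall effort is thus a structured case analysis over the orbit representatives rather than a single clever idea.
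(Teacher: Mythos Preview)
Your proposal is correct and matches the paper's approach: both treat $\ivsec{4}$ as a finite verification, reduce by the automorphism group (the paper calls this normalisation), perform a computer-assisted search for covering loose paths, and then check that the (exactly four) uncovered normalised configurations do not belong to $\mengeeingeschrankt{4}$. The paper is blunter about the search---it simply runs a program over all normalised $4$-tuples rather than restricting to $\mengeeingeschrankt{4}$ up front---whereas you sketch a layer-based routing heuristic; but since you explicitly allow falling back to exhaustive search for the delicate representatives, the two arguments are substantively the same.
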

Proposition~$\ref{dim4}$ is proved using computer assistance, see Section~\ref{sec: base case}.
We remark, though, that it is actually easy to verify directly that $\cube{4}$ is loose Hamilton connected (a weaker condition than $\ivsec{4}$) --
the interested reader can find a proof in Appendix~A of~\cite{JMthesis}.
Since intuitively higher dimensions allow more freedom to construct loose paths,
this makes the general statement of Theorem~\ref{thm:lHc} for $d \geq 4$ at least plausible.

Next we proceed to the induction step. This can be broken down into Corollary~\ref{CausA} and the following two results.
\begin{lem} \label{d-1aufd}
	$\ivsec{d-1} \implies \ivfirst{d}$ for all $d\geq 5$.
\end{lem}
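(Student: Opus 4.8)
The plan is to prove $\ivsec{d-1} \implies \ivfirst{d}$ by the layer-splitting strategy outlined in Section~\ref{subsec: proof outline}. Fix $d \geq 5$ and a configuration $(\startvx,\evx,\firstskipvx,\secondskipvx) \in \menge{d}$; I want to build a loose Hamilton path from $\startvx$ to $\evx$ omitting $\firstskipvx$ and $\secondskipvx$. First I would partition $V_d = \{0,1,2\}^d$ into three layers $L_0,L_1,L_2$ according to the value of one fixed coordinate (say the last), each layer being canonically isomorphic to $\cube{d-1}$. This coordinate should be chosen cleverly relative to the four special vertices $\startvx,\evx,\firstskipvx,\secondskipvx$ so that they distribute across the layers in a controllable way; since there are four special vertices and three layers, some layer contains at most one of them, and I would exploit the freedom in choosing the splitting coordinate (and in permuting coordinate values) to force a convenient distribution.

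The core idea is then to find a loose Hamilton path within each layer using the induction hypothesis $\ivsec{d-1}$, and to stitch the three layer-paths together into a single loose path using \engquerkante{}s, i.e.\ edges whose three vertices are the three points agreeing on the first $d-1$ coordinates but differing in the last. The key subtlety, already flagged in the outline, is that each lifting edge used to pass from one layer to another also occupies a vertex in the third layer; that third vertex must then be one of the two omitted vertices in the induction hypothesis applied to that layer. This is exactly what the two ``omit'' slots $\firstskipvx,\secondskipvx$ in $\ivsec{d-1}$ are designed to absorb. Concretely, I would pick one lifting edge to connect $L_0$ to $L_1$ and another to connect $L_1$ to $L_2$; the endpoints of these edges in each layer become the prescribed start/end vertices of the induction-hypothesis path in that layer, while their ``footprints'' in the layers they pass over become the omitted vertices there. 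One must then check that in each layer the resulting local demand (a start vertex, an end vertex, and two vertices to omit) is in fact a configuration lying in $\mengeeingeschrankt{d-1}$, so that $\ivsec{d-1}$ genuinely applies.

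The routine but careful part is the bookkeeping: verifying that all the vertices named as layer-endpoints and layer-omitted vertices are pairwise distinct where required, that the two lifting edges are distinct and meet each layer in the intended points, that $\startvx$ and $\evx$ end up as the global endpoints (so the lifting edges attach to the correct ends of the layer-paths), and that $\firstskipvx,\secondskipvx$ are genuinely omitted globally rather than silently covered. Because the precise placement of $\startvx,\evx,\firstskipvx,\secondskipvx$ among the layers can vary, I expect this to split into several cases according to how the special vertices are distributed; for each case one selects the splitting coordinate, the lifting edges, and the per-layer configurations, and then invokes $\ivsec{d-1}$ three times.

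The hard part will be ensuring that the per-layer demands always land inside the restricted set $\mengeeingeschrankt{d-1}$ rather than merely inside $\menge{d-1}$ --- the restriction via the types $\restriction{1},\dots,\restriction{4}$ was introduced precisely to dodge the four uncovered configurations in $\menge{4}$, so the induction only survives if the lifting edges and endpoint choices can always be arranged to avoid those forbidden local patterns. This is where the freedom in choosing the splitting coordinate and the values $0,1,2$ within the pivot coordinate must be spent, and it is the step most likely to force additional case analysis or an auxiliary lemma guaranteeing that a ``good'' choice of lifting edges always exists. I would therefore structure the proof so that the first move in each case is to certify membership in $\mengeeingeschrankt{d-1}$, and only afterwards assemble the global path.
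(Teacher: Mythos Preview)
Your high-level plan matches the paper's: split into three layers, find a loose path in each layer via the induction hypothesis, and glue with \engquerkante{}s. You also correctly anticipate that the real work is forcing each per-layer demand to lie in $\mengeeingeschrankt{d-1}$ rather than merely $\menge{d-1}$, and that this is likely to need an auxiliary lemma. Where your plan diverges from what actually works is in the mechanics of the lifting edges and in where the usable freedom sits.

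First, two lifting edges are not enough in general. For a configuration of type $\type{1}{d}$ (say $\startvx,\firstskipvx\in L_0$, $\evx\in L_1$, $\secondskipvx\in L_2$), a route $L_0\to L_2\to L_1$ with two \engquerkante{}s leaves exactly one omitted vertex in each of $L_1$ and $L_2$, which is a parity obstruction: a loose path covers an odd number of vertices, so each layer must omit an even number. The paper fixes this by using \emph{three} \engquerkante{}s for types $\type{1}{d},\type{2}{d}$ (and their reversals), one of them anchored at $\startvx$ itself; thus $\startvx$ is covered by a lifting edge and becomes an \emph{omitted} vertex in the $L_0$ layer-path, while two further auxiliary vertices $\firsthelper,\secondhelper\in V_{d-1}$ determine the other two lifting edges. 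Only for type $\type{3}{d}$ do two lifting edges suffice, with the middle layer receiving a genuine LHP (available since $\ivsec{d-1}$ implies $\cube{d-1}$ is lHc).

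Second, the freedom that makes the $\restriction{i}$ conditions hold is not in the choice of splitting coordinate or value-permutation --- those are essentially exhausted by normalising the last two rows of the configuration matrix (the paper runs through the finitely many possibilities for the penultimate row). The decisive freedom is in the first $d-2$ coordinates of the auxiliary vertices $\firsthelper,\secondhelper$. The paper's Claim~\ref{auxiliary coordinates} (your anticipated ``auxiliary lemma'') shows that, given any four vertices in $V_{d-2}$, one can pick $\firsthelper',\secondhelper'$ disjoint from them with prescribed coordinate agreements (e.g.\ $\firsthelper'$ agreeing with $\secondhelper'$ in two coordinates, or with two specified vertices in one coordinate each). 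These agreements are exactly what the definitions of $\restriction{1},\dots,\restriction{4}$ ask for, and a short table then verifies, case by case over the penultimate row, that the three per-layer configurations all land in $\mengeeingeschrankt{d-1}$. Without introducing $\firsthelper,\secondhelper$ and exploiting this freedom, you will not in general be able to certify membership in $\mengeeingeschrankt{d-1}$.
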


\begin{prop} \label{cube is lHC}
     $\ivsec{d} \implies \cube{d}$ is loose Hamilton connected for all $ d \geq 4$.
\end{prop}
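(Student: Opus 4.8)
The plan is to deduce loose Hamilton connectivity from $\ivsec{d}$ by converting a path that omits two vertices into one that covers everything, at the cost of prepending a single hyperedge. So fix two distinct vertices $\startvx,\evx \in V_d$ between which we seek a loose Hamilton path. The key is a parity bookkeeping: a loose path covering all $3^d$ vertices has length $(3^d-1)/2$, whereas a path guaranteed by $\ivsec{d}$ omits exactly two vertices and hence has length $(3^d-3)/2$. Since prepending one hyperedge to a loose path increases its length by one and introduces exactly two new vertices, a single such extension is precisely what is needed to pass from an $\ivsec{d}$-path to a full loose Hamilton path.

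Concretely, I would fix a coordinate direction $i$ and let $\{\startvx, u, w\}$ be the unique edge through $\startvx$ in direction $i$, so $u,w$ are the two vertices agreeing with $\startvx$ off coordinate $i$. If $\ivsec{d}$ supplies a loose path from $w$ to $\evx$ omitting $\startvx$ and $u$ and covering everything else, then prepending the edge $\{\startvx, u, w\}$ — with $\startvx$ as the new initial vertex, $u$ as the adjacent middle vertex and $w$ as the connecting vertex joining on to the old path — yields a loose path from $\startvx$ to $\evx$. Indeed the old path avoids $\startvx$ and $u$, so no vertex is repeated, and the two previously omitted vertices $\startvx,u$ are now covered; hence the result is a loose Hamilton path. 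Thus it suffices to exhibit, for the given pair $(\startvx,\evx)$, a coordinate direction together with a labelling $(u,w)$ of the two off-$i$ neighbours of $\startvx$ for which the configuration $(w,\evx,\startvx,u)$ lies in $\mengeeingeschrankt{d}$.

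The main obstacle is exactly this last step: using the explicit description of the five types and the restrictions $\restriction{1},\dots,\restriction{4}$ of Section~\ref{sec:configtypes}, one must check that for \emph{every} choice of distinct $\startvx,\evx$ at least one admissible configuration of the above shape belongs to $\mengeeingeschrankt{d}$. Here I would exploit all the freedom available: the $d\ge 4$ coordinate directions at $\startvx$, the two role-assignments of $u$ and $w$, the symmetric option of extending at the $\evx$-endpoint instead, and the reversal symmetry of loose paths (so $\startvx$ and $\evx$ may be interchanged). A short case analysis, organised by the coordinatewise relationship between $\startvx$ and $\evx$, should show that these options cannot all fail the membership test in $\mengeeingeschrankt{d}$ simultaneously.

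Finally I would handle the genuinely degenerate situations separately: namely when $\evx$ coincides with one of the two off-$i$ neighbours of $\startvx$, which happens in at most one direction and is therefore avoidable since $d\ge 4$ offers several directions, together with any boundary cases singled out by the restrictions. As all these verifications are finite and purely local — they concern only the relative position of four vertices — they can be dispatched directly, completing the implication that $\ivsec{d}$ forces $\cube{d}$ to be loose Hamilton connected.
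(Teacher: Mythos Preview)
Your approach is essentially the same as the paper's: both append a single hyperedge at one endpoint and reduce to covering a four-vertex configuration in $\mengeeingeschrankt{d}$. The paper extends at the $\evx$-end rather than the $\startvx$-end, taking the lifting edge $\{u,v,\evx\}$ through $\evx$ and checking that the configuration $(\startvx,u,\evx,v)$ (or $(\startvx,u,v,\evx)$, depending on whether $\startvx$ and $\evx$ share a coordinate) is of type $\restriction{1}$; the $\restriction{1}$ condition is then immediate because $u$ agrees with $\evx$ and $v$ on $d-1$ coordinates.

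Your version with extension at $\startvx$ works just as well, and the membership check you defer is in fact equally trivial---no need to ``exploit all the freedom available''. With $\{a,u,w\}$ the edge in direction $i$, split at $i$: since $w,\startvx,u$ lie in three different layers and agree off $i$, the configuration $(w,\evx,\startvx,u)$ (after possibly swapping the two omitted vertices) is of type $\type{4}{d-1}$ or $\type{5}{d-1}$ according to which layer $\evx$ falls in, and the required $\restriction{4}$ or $\restriction{2}$ agreement is automatic because $w$ agrees with $\startvx$ (and $u$) on all $d-1\ge 3$ coordinates other than $i$. The only genuine degeneracy is $\evx\in\{u,w\}$, which as you note occurs for at most one direction. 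So your sketch is correct; it just overstates the difficulty of the final verification, which the paper dispatches in two short cases.
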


The structure of the proof and the relevant implications are summarised in the following figure: \\
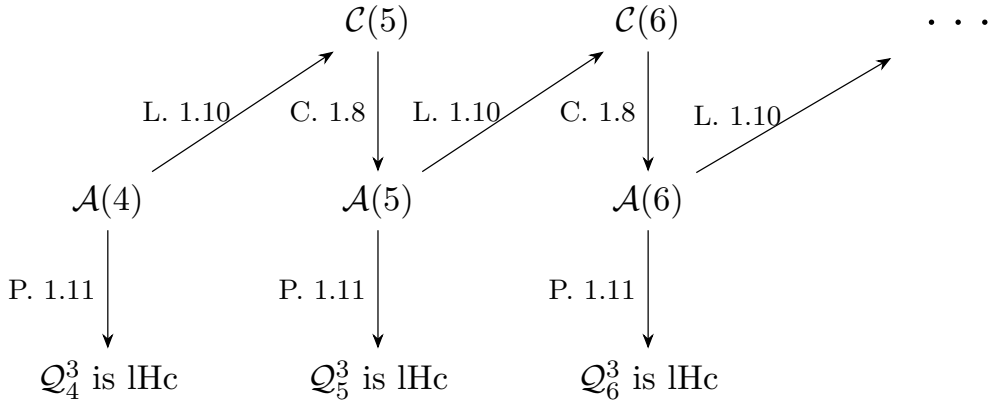
\begin{figure}[h]
    \centering\resizebox{0.8\textwidth}{!}{
\begin{tikzpicture}[>=Stealth]
    \node (A1) at (0,0) {$\ivsec{4}$};
    \node (A2) at (3,0) {$\ivsec{5}$};
    \node (A3) at (6,0) {$\ivsec{6}$};
    \node (C2) at (3,2) {$\ivfirst{5}$};
    \node (C3) at (6,2) {$\ivfirst{6}$};
    \node (C4) at (9.5,2) {\huge \ldots{} };
    \node (D1) at (0,-2) {$\cube{4}$ is lHc};
    \node (D2) at (3,-2) {$\cube{5}$ is lHc};
    \node (D3) at (6,-2) {$\cube{6}$ is lHc};
    
    \draw[->] (A1) -- (C2) node[midway, left] {\footnotesize L. \ref{d-1aufd}};
    \draw[->] (C2) -- (A2) node[midway, left] {\footnotesize C. \ref{CausA}};
    \draw[->] (A2) -- (C3) node[midway, left] {\footnotesize L. \ref{d-1aufd}};
    \draw[->] (C3) -- (A3) node[midway, left] {\footnotesize C. \ref{CausA}};
    \draw[->] (A1) -- (D1) node[midway, left] {\footnotesize P. \ref{cube is lHC}};
    \draw[->] (A2) -- (D2) node[midway, left] {\footnotesize P. \ref{cube is lHC}};
    \draw[->] (A3) -- (D3) node[midway, left] {\footnotesize P. \ref{cube is lHC}};
    \draw[->] (A3) -- (8.7,1.6) node[midway, left] {\footnotesize L. \ref{d-1aufd}};
  
\end{tikzpicture}}

  \captionsetup{width=0.88\textwidth, font=small}
  \caption{Our induction step ``$\ivsec{d-1} \implies \ivsec{d}$'' follows from Lemma~\ref{d-1aufd} and Corollary~\ref{CausA}. 
Proposition~\ref{cube is lHC} creates the connection between our hypotheses and Theorem~\ref{thm:lHc}, the loose Hamilton connectedness of the cube hypergraph $\cube{d}$ for all $d \geq 4$. Note that ``lHc'' is an abbreviation for ``loose Hamilton connected''.
}
    \label{fig:induction structure}
\end{figure}

Corollary~\ref{CausA} follows immediately from the definitions and Proposition~\ref{cube is lHC} can be deduced through elementary arguments. Thus, the main difficulty in the inductive step is to prove Lemma~\ref{d-1aufd}.

\section{Preliminaries}\label{sec: preliminaries}

\subsection{Definitions and Notation}

For an integer $n \in \mathbb{N}$, we denote by $[n]:= \{1,2,\ldots,n\}$ the set of the natural numbers up to $n$.

Formally, a vertex $a \in V_d$ is a sequence $(a_1,a_2,\ldots,a_d)$, where $a_i \in \{0,1,2\}$.
We will often denote such a sequence by $a_1a_2\ldots a_d$.

Let $v \in V_{d-1}$ and $ i \in \{0,1,2\}$. With a slight abuse of notation we use $(v,i)$ to denote the sequence in $V_d$ obtained by appending $i$ to $v$, for example ${(1020,1)} = {10201}$. We will occasionally also denote this by $vi$ for simplicity.

In the other direction, suppose $v \in V_{d}$ and $\tilde{d} \in \{1,\ldots{},d-1\}$.
We denote by $v_{[\tilde{d}]}$ the restriction of $v$ to its first $\tilde d$ coordinates (i.e.\ we truncate the sequence after $\tilde d$ entries). This will most frequently be used with $\tilde d = d-1$, but also occasionally with $\tilde d = d-2$.

The following observation is trivial, but essential to our proof strategy.
\begin{rem} For $d \in \mathbb{N}_0$ the cube hypergraph $\cube{d}$ can be recursively constructed as follows:
\begin{itemize}
    \item $\cube{0}$ consists of a single vertex (the empty sequence) and no edges.
\item For $d\ge 1$, to construct $\cube{d}$, take three copies of $\cube{d-1}$, which we call \textit{layers} $L_0,L_1$ and $L_2$: within layer $L_i$, each vertex $v\in V_{d-1}$ has a copy $vi$, and for every edge $\{u,v,w\}$ of $\cube{d-1}$, we have the edge $\{ui,vi,wi\}$.
In addition for every vertex $v \in V_{d-1}$, we add the edge $\{v0,v1,v2\}$; such edges are called \textit{\engquerkante}s. 
\end{itemize}
\end{rem}

We say that we \textit{split at coordinate i} to mean that we swap coordinate~$i$ with the last coordinate (see Section~\ref{subsec: symmetries} on symmetries) so that it now determines the layers.

We will often use the following abbreviations for frequently occurring terminology.\\
\begin{center}
    \begin{tabular}{rl}
     lHc:& loose Hamilton connected \\
     LP:& loose path\\
     LHP: & loose Hamilton path
\end{tabular}
\end{center}

We use the standard notion of \textit{Hamming distance} between two vertices, defined as the number of coordinates in which the two vertices differ.

A \emph{configuration} in $\cube{d}$ (informally introduced in Section~\ref{subsec: proof outline}) is a tuple of distinct vertices in $V_d$. A configuration can have any number of vertices, but we will consider configurations of two or four vertices.

\subsection{Symmetries and normalisation}\label{subsec: symmetries}\label{sec:normalisation}
We will make heavy use of automorphisms: any permutation of the coordinates (from $[d]$) corresponds to an automorphism of $\cube{d}$, and similarly any permutation of the values (from $\{0,1,2\}$) in any one coordinate gives an automorphism. Naturally, we can also permute the values on any number of coordinates, and also combine the two possibilities.

For the rest of the paper, we refer to automorphisms as \emph{symmetries}, and will use them to drastically reduced the number of cases we have to consider. Importantly, a loose path under the action of a symmetry remains a loose path.

To reduce the number of possible scenarios in a systematic way we also introduce normalisation, which yields equivalence classes of configurations. 
We will introduce normalisation for configurations of four vertices, but it also works analogously for configurations of two vertices. To illustrate the normalisation procedure, consider the configuration $\startvx,\evx,\firstskipvx,\secondskipvx$ in $V_4$ with\footnote{Formally, the vertices of the configuration should be sequences of length four, but it is convenient to express them as column vectors.}\\
\begin{center}  
$\startvx=\begin{pmatrix}2\\2\\0\\1\end{pmatrix},\qquad
\evx=\begin{pmatrix}0\\2\\1\\1\end{pmatrix},\qquad
\firstskipvx=\begin{pmatrix}2\\0\\1\\1\end{pmatrix},\qquad
\secondskipvx=\begin{pmatrix}1\\0\\2\\1\end{pmatrix}$.
\end{center}

We start by writing the configuration in a matrix with every column representing a vertex. \\
\begin{center}
$\left( \begin{array}{cccc}
2&0&2&1\\
2&2&0&0\\
0&1&1&2\\
1&1&1&1
\end{array}\right)$\\
\end{center}

We permute the entries in each row by \textit{first traversal}, meaning we permute the values within a row such that, considering only entries of the row in which a value appears for the first time, these values are in ascending order and without gaps. 
(Equivalently, we choose the permutation of $\{0,1,2\}$ which lexicographically minimises the row.)
For example, consider the first row $(\ 2\ 0\ 2\ 1\ )$, and we choose the permutation $2\mapsto 0 \mapsto 1 \mapsto 2$, and the row becomes $(\ 0\ 1\ 0\ 2\ )$.
By permuting the values in each row (independently) in this fashion we obtain the matrix 
$$\begin{pmatrix}0 & 1 & 0 & 2\\0 & 0 & 1 & 1\\0 & 1 & 1 & 2\\0 & 0 & 0 & 0\end{pmatrix}.$$
The next step is to rearrange the rows lexicographically (corresponding to a permutation of coordinates). 
In our example, this gives us the matrix 
$$C:=\begin{pmatrix}0 & 0 & 0 & 0\\0 & 0 & 1 & 1\\0 & 1 & 0 & 2\\0 & 1 & 1 & 2\end{pmatrix}.$$

If our configuration is of two vertices we are now finished. For a configuration of four vertices we observe that wlog we can swap the two omitted vertices $\firstskipvx$ and $\secondskipvx$,
which corresponds to switching the third and fourth column of $C$. We then once again permute the values in each row by first traversal and rearrange the rows lexicographically to obtain the matrix 

$$\tilde C:=\begin{pmatrix}0 & 0 & 0 & 0\\0 & 0 & 1 & 1\\0 & 1 & 2 & 0\\0 & 1 & 2 & 1\end{pmatrix}.$$

Finally, we have to choose which of the configurations $C$ and $\tilde C$ we will use, and
we choose the lexicographically smaller one (where we read the entries one row at a time).
In our example, the first entry that differs between the two matrices is in the third entry of the third row, and $C$ has the smaller entry here, therefore $C$ is our choice.\footnote{It is of course possible that $C$ and $\tilde C$ might be identical for some starting configurations.}

This process generalises in a completely obvious way to configurations in $\cube{d}$ for any $d \in \mathbb{N}$.
A configuration is called \emph{normalised} if it can be obtained from some configuration by applying this process.

 We observe that in every step of the normalisation process we used symmetries and therefore we always stayed in the same equivalence class of configurations. In the remainder of the paper, we will only consider normalised configurations.

    \section{Induction hypothesis}\label{sec: induction hypothesis}
    Recall from Section~\ref{subsec: proof outline} that we defined $\ivfirst{d}$ and $\ivsec{d}$ to be the properties that all configurations in $\menge{d}$ or $\mengeeingeschrankt{d}$ respectively can be covered by loose paths. In this section we formally define the sets $\menge{d}$ and $\mengeeingeschrankt{d}$.
    To do this we have to define the types $\type{1}{d},\type{2}{d},\type{3}{d}, \type{4}{d}, \type{5}{d}$ and $\restriction{1},\restriction{2},\restriction{3},\restriction{4}$, which were also mentioned in Section~\ref{subsec: proof outline}.

\subsection{Configuration types}\label{sec:configtypes}

    \begin{defn}
    We say that a configuration $(a,b,x,y)$ is \emph{of type $t_i(d)$} for $i \in [5]$ if the following holds (see Figure~\ref{fig: types}):

\vspace{0.2cm}

\hspace{0.5cm}
\begin{minipage}{0.8\textwidth}
\begin{enumerate}[$t_1(d)$:]
\item $\startvx_d = \firstskipvx_d = 0,\evx_d=1, \secondskipvx_d =2$ and $\startvx_{[d-1]} \notin \{\evx_{[d-1]},\firstskipvx_{[d-1]},\secondskipvx_{[d-1]}\}$.
\item $\startvx_d = \evx_d = 0, \firstskipvx_d = 1, \secondskipvx_d =2$ and $\startvx_{[d-1]} \notin \{\evx_{[d-1]},\firstskipvx_{[d-1]},\secondskipvx_{[d-1]}\}$.
\item $\startvx_d=\firstskipvx_d=0$ and $\evx_d=\secondskipvx_d=1$.
\item $\startvx_d=0, \evx_d=\firstskipvx_d=1, \secondskipvx_d=2$ and $\evx_{[d-1]} \notin \{\startvx_{[d-1]},\firstskipvx_{[d-1]},\secondskipvx_{[d-1]}\}$.
\item $\startvx_d=\evx_d=0, \firstskipvx_d = 1,\secondskipvx_d=2$ and $\evx_{[d-1]} \notin \{\startvx_{[d-1]},\firstskipvx_{[d-1]},\secondskipvx_{[d-1]}\}$.
\end{enumerate}
\end{minipage}

\vspace{0.3cm}
    
    More generally we say a configuration $\configuration$ is of type $\type{i}{d}$, for $i \in [5]$, if using symmetries, and potentially switching $\firstskipvx,\secondskipvx$, we can transform the configuration $\configuration$ into a configuration $\tilde{\configuration}$ which has the form described above.
    \end{defn}
\begin{figure}[h]
\centering
\resizebox{0.8\textwidth}{!}{
\includegraphics{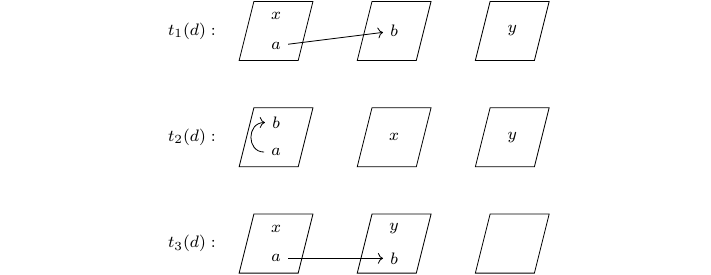}
}
\captionsetup{width=0.88\textwidth, font=small}
\caption{The types $\type{1}{d},\type{2}{d},\type{3}{d}$. As always, $\startvx$ represents the starting vertex, $\evx$ the end vertex and $\firstskipvx,\secondskipvx$ the two omitted vertices. For types $\type{1}{d}$ and $\type{2}{d}$, additionally the vertices $\evx,\firstskipvx$ and $\secondskipvx$ cannot lie on the same \text{\engquerkante} as $\startvx$. Types $\type{4}{d}$ and $\type{5}{d}$ can be obtained from types $\type{1}{d}$ and $\type{2}{d}$ respectively by switching $\startvx$ and $\evx$.}
\label{fig: types}
\end{figure}

Note that a configuration may be of more than one different type which mostly (but not solely) depends on which coordinate is chosen to split into layers.

    \begin{defn}
        The set $\menge{d}$ is the set of all normalised configurations of type $\type{1}{d},\type{2}{d},\type{3}{d},\type{4}{d}$ or $\type{5}{d}$.
    \end{defn}
    
Next, we define the types $\restriction{1},\restriction{2},\restriction{3}\text{ and }\restriction{4}$.

    \begin{defn} A configuration $\configuration$ is of type $\restriction{i}$ for $i\in [4]$ if the following holds.
\begin{enumerate}[$\phi_1$:]
    \item $\configuration$ is of type $\type{1}{d}$ split at the last coordinate and $\evx$ agrees in at least one coordinate with $\startvx, \firstskipvx$ or $\secondskipvx$ (formally, $\exists i \in [d-1]$ such that $\evx_i \in \{\startvx_i,\firstskipvx_i,\secondskipvx_i\}$).
    \item $\configuration$ is of type $\type{2}{d}$ or $\type{5}{d}$ split at the last coordinate and either $\startvx$ or $\evx$ agrees with either $\firstskipvx$ or $\secondskipvx$ in at least two coordinates (formally, $\exists I \in \binom{[d-1]}{2}$ and $\exists u \in \{\startvx, \evx\}, $ and $ v \in \{\firstskipvx, \secondskipvx\}$ such that $u_i=v_i$ for both $i \in I$).
    \item $\configuration$ is of type $\type{3}{d}$ split at the last coordinate and either $\startvx$ agrees in at least one coordinate with $\secondskipvx$ or $\evx$ agrees with in at least one coordinate with $\firstskipvx$ (formally, $\exists i \in [d-1]$ such that either $\startvx_i$ = $\secondskipvx_i$ or $\evx_i=\firstskipvx_i$).
    \item $\configuration$ is of type $\type{4}{d}$ split at the last coordinate and $\startvx$ agrees in at least one coordinate with $\evx, \firstskipvx$ or $\secondskipvx$ (formally, $\exists i \in [d-1]$ such that $\startvx_i \in \{\evx_i,\firstskipvx_i,\secondskipvx_i\}$).
\end{enumerate}

More generally we say a configuration $\configuration$ is of type $\restriction{i}$, for $i \in [4]$, if using symmetries,
and potentially switching $\firstskipvx,\secondskipvx$, 
we can transform the configuration $\configuration$ into a configuration $\tilde{\configuration}$ which has the form described above.

    \end{defn}

    \begin{defn}
        The set $\mengeeingeschrankt{d}$ is precisely the set of all normalised configurations of types $\restriction{1},\restriction{2},\restriction{3},\restriction{4}$.
    \end{defn}

Recall that Observation~\ref{observationteilmenge} states that $\mengeeingeschrankt{d}$ is a subset of $\menge{d}$ -- this follows directly from the definitions, since a configuration of type $\restriction{i}$ must also be of type $\type{i}{d}$ (or $\type{5}{d}$ if $i=2$).

\subsection{Induction base case}\label{sec: base case}
    Recall that Proposition~\ref{dim4} states that $\ivsec{4}$ holds, which represents the base case of our induction.
    \begin{proof}[Proof of Proposition~\ref{dim4}]
    We prove this proposition using computer assistance: we wrote and ran a computer program to check which normalised configurations can be covered. The output of the program shows that we can cover every normalised configuration except for the four configurations $A,B,C,D$ defined below; we must therefore check that these configurations are not in $\mengeeingeschrankt{4}$.
    Appendix~\ref{app:code} describes in more detail how to check that all other configurations
    are indeed covered.
    
    Specifically, the four uncovered configurations, in matrix form, are:\\\\

\[
\begin{adjustbox}{valign=c, margin=-1cm 0.3cm -1cm -1cm}
{\fontsize{8}{10}\selectfont 
     $A:= \begin{pmatrix}
0&1&1&2 \\
0&1&1&2 \\
0&1&2&1 \\
0&1&2&1 \end{pmatrix},
\quad
B:=  \begin{pmatrix}
0&1&0&2 \\ 
0&1&0&2 \\
0&1&2&0 \\
0&1&2&0 \end{pmatrix},
\quad
 C:= \begin{pmatrix}
0&0&1&2 \\ 
0&0&1&2 \\
0&0&1&2 \\
0&1&0&1 \end{pmatrix},
\quad
D:= \begin{pmatrix}
0&0&1&1 \\ 
0&0&1&1 \\
0&1&0&1 \\
0&1&0&1 \end{pmatrix}$}
\end{adjustbox}
\]
As in Section~\ref{sec:normalisation}, the vertices are represented by column vectors in the usual order $\startvx,\evx,\firstskipvx,\secondskipvx$.

We observe that $A$ is of type~$\type{4}{4}$ split at the first coordinate (with a switch of $\firstskipvx,\secondskipvx$). 
Observe that all other types $\type{i}{4}$ require $\startvx$ to agree with at least one other vertex in the splitting coordinate, but in $A$, the starting vertex $\startvx$ has Hamming distance~$4$ to all other vertices.
Therefore the configuration $A$ is only of type~$\type{4}{4}$. 
Analogously we see that $B$ is only of type~$\type{1}{4} $ and that $D$ is only of type~$\type{3}{4}$, while $C$ is of type~$\type{3}{4}$ if split at the last coordinate and of type~$\type{2}{4}$ and $\type{5}{4}$ if split at any other coordinate.

Let us check if $A$ is of any of the types $\restriction{1},\restriction{2},\restriction{3}$ or $\restriction{4}$. Since $A$ is only of type~$\type{4}{4}$ it can by definition only be of type~$\restriction{4}$. But $\startvx$ does not agree in any coordinate with any of the vertices $\evx, \firstskipvx, \secondskipvx$ and therefore  the configuration $A$ is by definition not of type~$\restriction{4}$.
Analogously, one can also check that $B,C,D$ are not of the types~$\restriction{1},\restriction{2},\restriction{3}$ or $\restriction{4}$.
We omit the details.

 Since all four configurations are not of types $\restriction{1},\restriction{2},\restriction{3}$ or $\restriction{4}$, they are by definition not in $\mengeeingeschrankt{4}$.
    \end{proof}

    \section{Induction step}\label{sec: induction step}

In this section, we will prove the induction step. Recall from Section~\ref{subsec: proof outline} that this is broken down into three steps, given by Corollary~\ref{CausA}, Lemma~\ref{d-1aufd} and Proposition~\ref{cube is lHC}. Corollary~\ref{CausA} follows directly from the definitions of $\ivsec{d}$ and $\ivfirst{d}$, so we now proceed with the remaining two steps.

   \begin{proof}[Proof of Proposition~\ref{cube is lHC}]
       Let $d \geq 4$ and let $\startvx,\evx$ be any two distinct vertices in $V_d$. We want to show that there is a loose Hamilton path between $\startvx \text{ and }\evx$.
       Wlog $\startvx \equiv 0$, meaning every coordinate of $\startvx$ is zero. We differentiate two cases.\\
        \textbf{Case 1: $\startvx$ and $\evx$ agree in at least one coordinate.}\\
        In other words, there is a coordinate $i \in \{1,\ldots,d\}$ with $\evx_i=0$. We split at this coordinate $i$ and observe that $\startvx$ and $\evx$ lie in the same layer. Let $u,v$ be the two vertices that share a \text{\engquerkante} with $\evx$.
        We convert $(\startvx,\evx)$ into a configuration of four vertices by choosing
        $(\startvx',\evx',\firstskipvx',\secondskipvx'):=(\startvx,u,\evx,v)$.
        For our new configuration $(\startvx',\evx',\firstskipvx',\secondskipvx')$ to be of type~$\restriction{1}$, it has to be of type~$\type{1}{d}$ and $\evx'$ has to agree with $\startvx',\firstskipvx'$ or $\secondskipvx'$ in at least one coordinate.
        Clearly our configuration is of type~$\type{1}{d}$ and $b'$ agrees in $d-1$ coordinates with $\firstskipvx'$ and $\secondskipvx'$. Therefore the configuration $(\startvx',\evx',\firstskipvx',\secondskipvx')$ is of type~$\restriction{1}$, so lies in $\mengeeingeschrankt{d}$ which means by assumption that it is covered by a loose path. By appending the \text{\engquerkante} $\{ u,v,\evx\}$ to this path, we obtain a loose Hamilton path starting at $\startvx$ and ending at $\evx$.
        
\begin{figure}[h]
\centering
\resizebox{1\textwidth}{!}{
\includegraphics{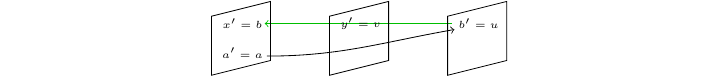}
}
\captionsetup{width=0.88\textwidth, font=small}
\caption{The black arrow represents the loose path from $\startvx$ to $u$ omitting the vertices $v$ and $\evx$. The green arrow represents the \text{\engquerkante} $(u,v,\evx)$.}
\label{fig: cube is lhc case 1}
\end{figure}
        \noindent \textbf{Case 2: $\startvx$ and $\evx$ differ in every coordinate.} Again, we convert $(\startvx,\evx)$ into a configuration of four vertices: let $u,v$ be the two vertices that share a \text{\engquerkante} with $\evx$ such that $v_d=\startvx_d$, and set $(\startvx',\evx',\firstskipvx',\secondskipvx'):=(\startvx,u,v,\evx)$. Analogously to case 1, we observe that our new configuration $(\startvx',\evx',\firstskipvx',\secondskipvx')$ is of type $\restriction{1}$ and therefore is covered by a loose path. Again, we append the \text{\engquerkante} $\{ u,v,\evx\}$ to obtain a loose Hamilton path starting at $\startvx$ and ending at $\evx$.

   \end{proof}

It remains to prove Lemma~\ref{d-1aufd}, which states that $\ivsec{d-1} \Rightarrow \ivfirst{d}$ for $d\geq 5$. The general strategy of the proof is displayed in Figure~\ref{fig_lpind1}: we can cover configurations of various types in $\cube{d}$ inductively, assuming the existence of appropriate loose paths within each layer.

 \begin{figure}[h]
 \centering
\resizebox{0.8\textwidth}{!}{
\includegraphics{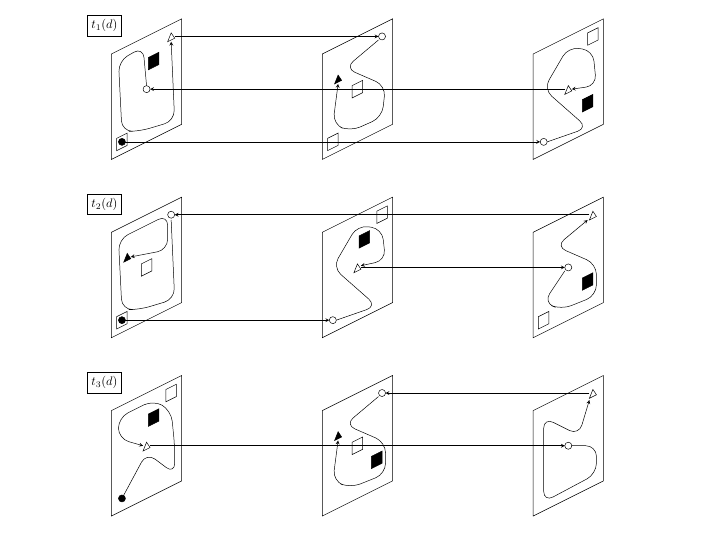}
}
\captionsetup{width=0.88\textwidth, font=small}
\caption{We can inductively construct configurations of types $\type{1}{d}, \type{2}{d}$ and $\type{3}{d}$, assuming that the necessary loose paths exist within each layer.
The filled shapes represent vertices of our configuration in $V_d$, while unfilled shapes indicate
what occurs within a layer; circles represent starting vertices, triangles are ending vertices and squares denote omitted vertices. The unfilled squares are covered by \text{\engquerkante}s which are used to jump between the other two layers.}
\label{fig_lpind1}
\end{figure}

The following proposition would clearly follow immediately from Lemma~\ref{d-1aufd} and Corollary~\ref{CausA} (for large $d$) -- however, we state and prove it here as a step on the way to proving Lemma~\ref{d-1aufd}.
    
\begin{prop}\label{lem d-1aufd}
            $\ivfirst{d-1} \implies \ivfirst{d}$.
\end{prop}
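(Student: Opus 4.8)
The plan is to split $\cube{d}$ at the last coordinate into three layers $L_0,L_1,L_2$, each isomorphic to $\cube{d-1}$, to cover each layer by a loose path provided by the hypothesis $\ivfirst{d-1}$, and to splice these together with \engquerkante{}s into a single loose path realising the prescribed configuration, exactly as indicated in Figure~\ref{fig_lpind1}. Since $\type{4}{d}$ and $\type{5}{d}$ are the reverses of $\type{1}{d}$ and $\type{2}{d}$ and a reversed loose path is again a loose path, it suffices to deal with configurations of types $\type{1}{d},\type{2}{d}$ and $\type{3}{d}$, which I would treat as three separate cases.

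In each case I would prescribe a traversal of the layers together with two or three \engquerkante{}s whose bases are left free, and then apply $\ivfirst{d-1}$ inside each layer with suitably chosen endpoints and omitted vertices. The governing principle is a covering identity: every vertex other than $\firstskipvx,\secondskipvx$ must be used exactly once, so the two vertices omitted by the loose path inside a given layer must each be accounted for globally -- either as one of $\firstskipvx,\secondskipvx$, or as the middle vertex of some \engquerkante, or as the global start $\startvx$, which I would feed directly into the first \engquerkante{} so that its own layer-path is free to omit it. Concretely, for $\type{1}{d}$ (where $\startvx,\firstskipvx\in L_0$, $\evx\in L_1$, $\secondskipvx\in L_2$) I would send $\startvx$ through a \engquerkante{} $g_1$ into $L_2$, cross back into $L_0$ through $g_2$, and finally cross into $L_1$ through $g_3$ ending at $\evx$; then the $L_0$-path omits $\{\startvx,\firstskipvx\}$, the $L_2$-path omits $\secondskipvx$ and the middle of $g_3$, and the $L_1$-path omits the middles of $g_1,g_2$, and a short count confirms that every vertex of $V_d$ except $\firstskipvx,\secondskipvx$ is covered exactly once. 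Type $\type{2}{d}$ is handled by the same cyclic three-\engquerkante{} scheme. For $\type{3}{d}$ the third layer contains none of the four special vertices, so instead of omitting two vertices there I would cover it by a loose \emph{Hamilton} path; this is legitimate because $\ivfirst{d-1}$ gives $\ivsec{d-1}$ (Corollary~\ref{CausA}) and hence, by Proposition~\ref{cube is lHC}, that $\cube{d-1}$ is loose Hamilton connected.

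The real difficulty lies not in the splicing but in verifying that each of the sub-configurations induced inside the three layers actually belongs to $\menge{d-1}$, i.e.\ is -- after normalisation and a possible re-split -- one of the five admissible types; only then may $\ivfirst{d-1}$ be invoked. This is exactly where the side conditions in the type definitions come into play: I would exploit the freedom in the bases of the \engquerkante{}s, together with the hypotheses on the given configuration (for instance $\startvx_{[d-1]}\notin\{\evx_{[d-1]},\firstskipvx_{[d-1]},\secondskipvx_{[d-1]}\}$ in the case of $\type{1}{d}$), to force the endpoints and omitted vertices of each layer into the right relative positions and to keep all the vertices involved distinct. I expect the bulk of the argument to be a careful case analysis showing that, for each of $\type{1}{d},\type{2}{d},\type{3}{d}$, such bases can be chosen so that all three sub-configurations are admissible; once this is secured, the verification that the spliced object is a genuine loose path covering precisely $V_d\setminus\{\firstskipvx,\secondskipvx\}$ is routine.
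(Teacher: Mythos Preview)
Your proposal is correct and follows the same strategy as the paper: reduce to types $\type{1}{d},\type{2}{d},\type{3}{d}$, splice three layer-paths via \engquerkante{}s exactly as in Figure~\ref{fig_lpind1}, and for $\type{3}{d}$ cover the empty layer by a loose Hamilton path obtained from $\ivfirst{d-1}\Rightarrow\ivsec{d-1}\Rightarrow$ loose Hamilton connectedness. The paper makes your anticipated case analysis concrete by fixing the split at coordinate~$d$ and then distinguishing the (up to fourteen, after normalisation) possibilities for the penultimate row $(\startvx_{d-1},\evx_{d-1},\firstskipvx_{d-1},\secondskipvx_{d-1})$; this row, together with a suitable choice of $\firsthelper_{d-1},\secondhelper_{d-1}$, pins down the type of each layer sub-configuration in $\menge{d-1}$, which is exactly the verification you flag as the crux.
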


        \begin{proof}
            We observe that $\type{4}{d}$ and $\type{5}{d}$ are the inversions of $\type{1}{d}$ and $\type{2}{d}$, i.e.\ with the roles of $\startvx,\evx$ switched. Therefore it is sufficient to show that we can cover every configuration of the types $\type{1}{d},\type{2}{d}$ and $\type{3}{d}$. We aim to construct the necessary loose paths using loose paths that cover configurations from $\menge{{d-1}}$. 
            We consider the last two coordinates of our configuration to check that the loose paths within layers are of this form.
            Note that the type of a configuration already determines the last row, if we reorder rows such that we are splitting at the last coordinate.
            After normalisation there are 14 possibilities for the penultimate row. To see this, observe that the first entry (i.e.\ $a_{d-1}$) is always zero, the second is either 0 or 1, and the third and fourth entry lie in $\{0,1,2\}$. This means we have two possibilities for the second entry and three possibilities for both the third and fourth entry which gives us a total of $2 \cdot 3 \cdot 3 =18$ possibilities. However, we also observe that 0002, 0020, 0022 and 0021 are not valid rows in a normalised configuration, since distinct entries must be in ascending order according to their first appearance and not omit any values. This leaves us with the following 14 possibilities for the penultimate row:\\
            
           $ \begin{array}{ccc}
    \hspace{4cm} r_1=0000& \hspace{3cm} & r_8= 0102 \\
    \hspace{4cm}r_2=0001&& r_9=0110 \\
    \hspace{4cm}r_3=0010&& r_{10}=0111\\
    \hspace{4cm}r_4=0011&& r_{11}=0112\\
    \hspace{4cm}r_5=0012&& r_{12}=0120\\
    \hspace{4cm}r_6=0100&& r_{13}=0121\\
    \hspace{4cm}r_7=0101&& r_{14}=0122\\
\end{array}\\$

For each possible type of configuration, we consider all possibilities for the penultimate row.

\noindent \textbf{Case 1: $\configuration$ is of type $\type{1}{d}$.}
We know that $\startvx$ and $\firstskipvx$ are two distinct vertices in the same layer, and therefore they differ in a coordinate $i \in \{1,\ldots{},d-1\}$. We may assume wlog that $i=d-1$.
This means $r_1,r_2,r_6,r_7$ and $r_8$ cannot be our penultimate row. In Figure~\ref{fig_t1} we construct our desired loose path for configurations of type $\type{1}{d}$ with the penultimate row being $r_3,r_4,r_5,r_9,r_{10},r_{11},r_{12},r_{13}$ or $r_{14}$. We use the notation $a\abr{s}$ to denote $(a_{[d-1]},s)$ for $s=0,1,2$, i.e.\ the vertex obtained from $a$ by replacing the last entry with $s$ (which is simply $a$ if $s=a_d$).

To help interpret this figure, let us describe the case when the penultimate row is $r_3$ in detail. Here we split into layers according to the last coordinate. Since our configuration is of type $t_1(d)$, we have $a,x$ in layer $0$, we have $y$ in layer $1$ and $b$ in layer $2$. We now subdivide each layer into three sublayers according to the penultimate coordinate.
Since the penultimate row is $0010$ in case $r_3$, for example the vertex $x$ (the third vertex of the configuration) lies in sublayer $1$ of layer $0$.

We now pick two auxiliary vertices $v,w \in V_{d-1}$ with the property that $v_{d-1} = 1$ and $w_{d-1}=2$, and also such that $v_{[d-2]},w_{[d-2]}$ are distinct and do not lie in $\{a_{[d-2]},b_{[d-2]},x_{[d-2]},y_{[d-2]}\}$, which is possible since $|V_{d-2}| = 3^{d-2} \ge 9$.
In Figure~\ref{fig_t1}, copies of $v$ and $w$ appear in each layer -- formally, these are actually the vertices $vi$ and $wi$ for $i=0,1,2$, but we omit the last coordinate for clarity.

Now in each layer $L_i$ we find a path $P_i$ between the two vertices joined by an arrow and omitting the other two vertices displayed in the figure.
The types of the configuration to be covered within each layer are displayed beside it in the figure -- being of this type guarantees that the configuration is in $\menge{d-1}$ and therefore by the induction hypothesis can be covered by a loose path.

We now define the \text{\engquerkante}s $e_\startvx:=(\startvx,\startvx\abr{1},\startvx\abr{2})$,
$e_v:=(v2,v1,v0)$ and $e_w:=(w0,w2,w1)$. The concatenation $e_\startvx P_2 e_v P_0 e_w P_1$ is a loose path from $a$ to $b$ avoiding only $x$ and $y$.

More generally, whatever the penultimate row, we will choose auxiliary vertices $v,w\in V_{d-1}$, find loose paths $P_0,P_1,P_2$ covering the analogous vertices to the case of $r_3$ and concatenate the paths with the appropriate edges as above. The only things that change between cases are the choices of $v,w$ and the types of the paths that we find within a layer. These are displayed in the figure below (which in particular fixes the values of $v_{d-1},w_{d-1}$). Importantly, the condition in the definition of type $\type{1}{d}$ that $a_{[d-1]} \notin \{b_{[d-1]},x_{[d-1]},y_{[d-1]}\}$ implies that the edge $e_\startvx=(\startvx,\startvx\abr{1},\startvx\abr{2})$ avoids the vertices $b,x,y$, and we may choose $v,w$ such that there are also no other such conflicts arising from any \text{\engquerkante}s.

\begin{figure}[h]
\centering
    \includegraphics{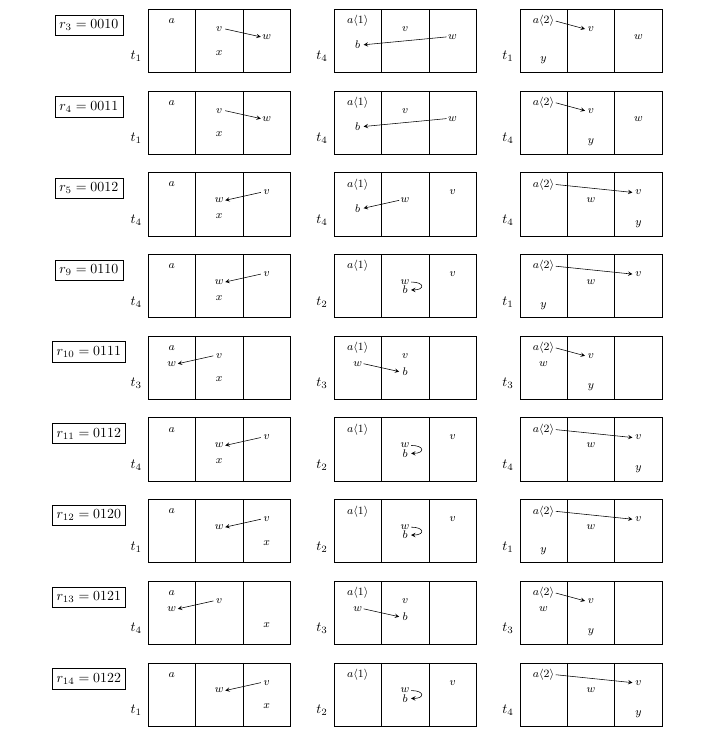}
\captionsetup{width=0.88\textwidth, font=small}
\caption{
The constructions for $\type{1}{d}$. 
Each layer is partitioned into sublayers according to coordinate $d-1$ and depicted from left (0) to right (2).
Each layer is labelled with the type of path used to cover (most of) the layer, where $\shorttype{i}$ should be understood as $\type{i}{d-1}$.
We use arrows to indicate the starting and ending vertex in each layer.
Moves between layers are not shown; these can be found in Figure \ref{fig_lpind1}.
}
\label{fig_t1}
\end{figure}

\noindent \textbf{Case 2: $\configuration$ is of type $\type{2}{d}$.}
Similarly to case~1 we split $\configuration$ at the last coordinate, and without loss of generality the final row is 0012.
We know that $\startvx$ and $\secondskipvx$ do not share a \text{\engquerkante}. This means there exists a coordinate $i \in [d-1]$ such that $\startvx_i \neq \secondskipvx_i$, and we assume wlog that $i=d-1$. In particular, this means the penultimate row cannot be $r_1,r_3,r_6,r_9$ or $r_{12}$.
We also observe that, since $\startvx_d=\evx_d$, the penultimate row $r_{11}$ can be obtained from $r_8$ by swapping $\startvx$ and $\evx$ and permuting the entry values.
Moreover, if the penultimate row is $r_8$, then by switching the last two coordinates we obtain a configuration of type $\type{1}{d}$ with penultimate row $r_5$, which was already covered in case~1.
This means we can exclude the rows $r_8$ and $r_{11}$.

Figure~\ref{fig_t2} shows how we construct loose paths $P_0,P_1,P_2$ within the appropriate layers for $r_2,r_4,r_5,$\linebreak$r_7,r_{10},r_{13}$ or $r_{14}$;
we concatenate the edge $(a,a\abr{2},a\abr{1})$, the path $P_1$, the edge $(v1,v0,v2)$, the path $P_2$, the edge $(w2,w1,w0)$ and the path $P_0$.\\\\

\begin{figure}[h]
\centering
\includegraphics{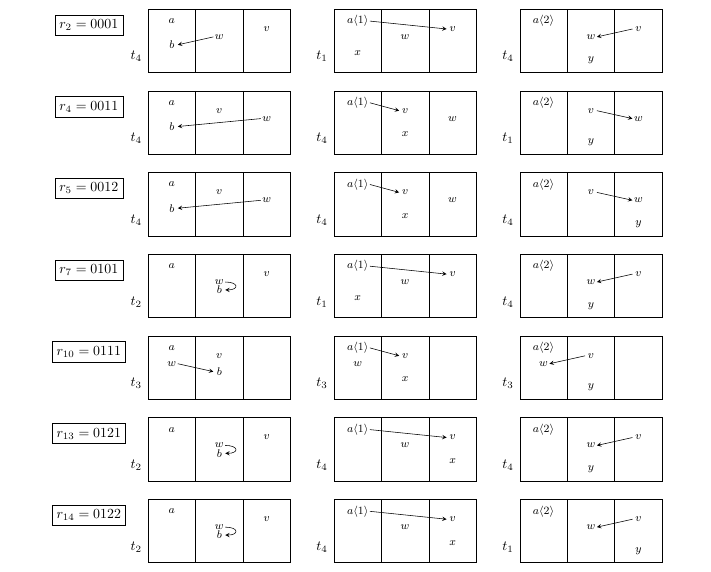}
\captionsetup{width=0.88\textwidth, font=small}
\caption{
The constructions for $\type{2}{d}$ -- the setup is the same as in Figure~\ref{fig_t1}.
}\label{fig_t2}
\end{figure}
\noindent \textbf{Case 3: $\configuration$ is of type $\type{3}{d}$.}
Again, we assume wlog that $\configuration$ is split at the last coordinate,
so the last row is 0101.
We know that $\startvx$ and $\firstskipvx$ are two distinct vertices in the same layer, and analogously to case~1 we can restrict the penultimate row to $r_3,r_4,r_5,r_9,r_{10},r_{11},r_{12},r_{13}$ or $r_{14}$.

We observe that if the penultimate row is $r_5=0012$, then switching the last two coordinates results in a configuration of type $\type{2}{d}$ with penultimate row $r_7$,
which we already handled in case~2.
We also observe that switching $\startvx,\evx$ and also switching $\firstskipvx,\secondskipvx$
also gives a configuration of type $\type{3}{d}$, and with the appropriate permutation of entry values, row $r_{11}$ becomes row $r_{12}$.
Moreover, if the penultimate row is $r_{12}$, then switching $\firstskipvx,\secondskipvx$ and switching the last two coordinates gives a configuration of type~$\type{1}{d}$ with penultimate row~$r_9$, which was already covered in case~1. This means we can exclude rows $r_5, r_{11}$ and $r_{12}$.

Figure~\ref{fig_t3} shows how we construct loose paths $P_0,P_1,P_2$ within the appropriate layers for $r_3,r_4,r_9,r_{10},r_{13}$ or $r_{14}$;
we concatenate the path $P_0$, the edge $(v0,v1,v2)$, the path $P_2$, the edge $(w2,w0,w1)$ and the path $P_1$.
Note that in this case, the path $P_2$ is a LHP in $L_2$, i.e.\ with no omitted vertices in this layer.\end{proof}

\begin{figure}[h]
\centering
\includegraphics{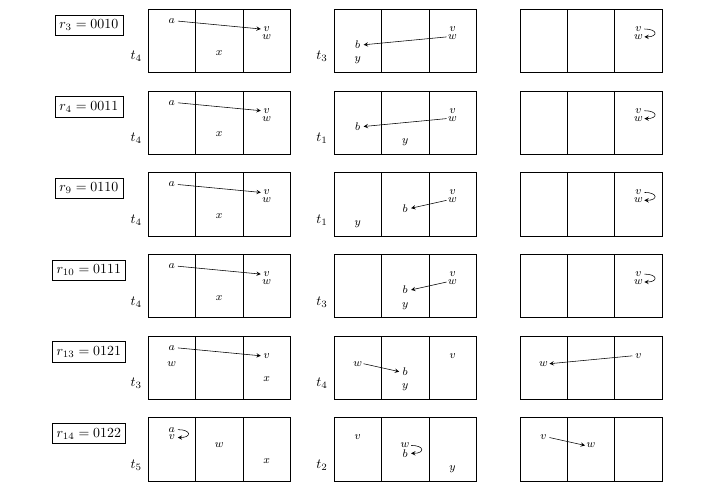}
\captionsetup{width=0.88\textwidth, font=small}
\caption{
The constructions for $\type{3}{d}$. Again, the setup is the same as in Figure~\ref{fig_t1}.
}\label{fig_t3}
\end{figure}

The proof of Proposition~\ref{lem d-1aufd} 
will also be used to prove Lemma~\ref{d-1aufd}:  we will take this proof and show that it is in fact sufficient to assume $\ivsec{d-1}$ instead of $\ivfirst{d-1}$. To do so we introduce the following (slightly technical-looking but ultimately elementary) claim.

\begin{claim} \label{auxiliary coordinates}
            Let  $d' \geq 3$ and let $\alpha,\beta,\gamma,\delta$ be any four distinct vertices in $V_{d'}$.  
            \begin{enumerate}[(i)]
                \item \label{i} There exist distinct vertices $\firsthelper',\secondhelper'\in V_{d'}\setminus\{\alpha,\beta,\gamma,\delta\}$ and $i,j \in [d']$ such that $\firsthelper'_i=\alpha_i$ and $\firsthelper'_j=\beta_j$.
                \item \label{ii} There exist distinct vertices $\firsthelper',\secondhelper'\in V_{d'}\setminus\{\alpha,\beta,\gamma,\delta\}$, there exists $j \in [d']$ and there exists $\mathcal{I} \in \binom{[d']}{2}$ such that $\firsthelper'_j=\alpha_j$ and $\firsthelper'_i=\secondhelper'_i$ for each $i \in \mathcal{I}$.
            \end{enumerate}
        \end{claim}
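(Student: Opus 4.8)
The plan is to prove each part by explicitly constructing the vertex $v'$ that carries the required agreement with $\alpha$ (and, in part~(ii), the agreement with $w'$), after which $w'$ is obtained cheaply. The only delicate point throughout is the smallest case $d'=3$, where the number of ``free'' coordinates is minimal; I would therefore state all the inequalities below so that they already hold at $d'=3$, the larger cases following a fortiori.

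For part~(i) I would split according to whether $\alpha$ and $\beta$ share a coordinate. If there is some $p\in[d']$ with $\alpha_p=\beta_p$, then I set $v'_p:=\alpha_p=\beta_p$ and let $v'$ range over the plane $\{u : u_p=\alpha_p\}$, which contains $3^{d'-1}\ge 9$ vertices; since at most four of these are forbidden, such a $v'$ exists, and it meets the requirement with $i=j=p$. If instead $\alpha$ and $\beta$ differ in every coordinate, I set $v'_1:=\alpha_1$ and $v'_2:=\beta_2$; because $\alpha_2\ne\beta_2$ and $\alpha_1\ne\beta_1$, this already forces $v'\ne\alpha$ (they disagree in coordinate $2$) and $v'\ne\beta$ (they disagree in coordinate $1$), so only $\gamma$ and $\delta$ remain to be avoided. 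The $d'-2\ge 1$ free coordinates give $3^{d'-2}\ge 3$ completions, of which at most two are forbidden, so a valid $v'$ exists with $i=1,\,j=2$. In either case $w'$ is then any of the at least $3^{d'}-5\ge 22$ vertices outside $\{\alpha,\beta,\gamma,\delta,v'\}$, which carries no further constraint.

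For part~(ii) I would fix $\mathcal{I}:=\{1,2\}$ and $j:=1$, and force $v'_1=w'_1:=\alpha_1$ together with $v'_2=w'_2:=s$ for a value $s$ still to be chosen; this guarantees $v'_j=\alpha_j$ and agreement of $v',w'$ on $\mathcal{I}$. The number of forbidden vertices $u$ with $u_1=\alpha_1$ is at most four, so by the pigeonhole principle over the three possible values of the second coordinate I can pick $s\in\{0,1,2\}$ for which at most one forbidden vertex lies in the plane $\{u : u_1=\alpha_1,\ u_2=s\}$. It then remains to choose two distinct ``tails'' on coordinates $3,\dots,d'$: there are $3^{d'-2}\ge 3$ such tails, at most one of which is blocked, leaving at least two from which to select the tails of $v'$ and $w'$. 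The resulting $v',w'$ are distinct because their tails differ, and both avoid the forbidden set by construction.

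The main obstacle, such as it is, lies entirely in the tightness of the case $d'=3$, where each free direction contributes only a factor of $3$ and the bounds $3^{d'-2}\ge 3$ and $3^{d'-1}\ge 9$ hold with no slack; one must ensure the forbidden vertices are \emph{genuinely} dodged rather than merely outnumbered. The case split in part~(i) (whether $\alpha,\beta$ agree somewhere or differ everywhere) and the pigeonhole choice of $s$ in part~(ii) are precisely the devices that keep the construction valid in this tight regime, and nothing beyond elementary counting is needed.
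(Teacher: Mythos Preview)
Your proof is correct and, for part~(i), essentially identical to the paper's: the same case split on whether $\alpha$ and $\beta$ share a coordinate, with the same constructions in each case (the paper picks $v'_2\notin\{\alpha_2,\beta_2\}$ and $v'_3\notin\{\gamma_3,\delta_3\}$ explicitly rather than counting, but this is cosmetic).

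For part~(ii) the implementations differ slightly. The paper takes $\mathcal{I}=\{2,3\}$ and lets $v',w'$ \emph{differ} in coordinate~$1$ (setting $v'_1=\alpha_1$, $w'_1\ne v'_1$, then $v'_2=w'_2\notin\{\alpha_2,\beta_2\}$ and $v'_3=w'_3\notin\{\gamma_3,\delta_3\}$), which avoids all four forbidden vertices directly with no pigeonhole step. Your version instead takes $\mathcal{I}=\{1,2\}$ with $v'_1=w'_1=\alpha_1$ and uses pigeonhole on the second coordinate to limit the number of forbidden tails to one. Both are valid; the paper's route is marginally more direct, while yours makes the counting structure more explicit. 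Neither approach buys anything the other lacks.
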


Informally, we may restate the two properties as follows:
\begin{enumerate}[(i)]
    \item $\firsthelper'$ agrees with each of $\alpha$ and $\beta$ on at least one coordinate.
    \item $\firsthelper'$ agrees with $\alpha$ on at least one coordinate and $\firsthelper'$ agrees with $\secondhelper'$ on at least two coordinates. 
\end{enumerate}
Note that~\eqref{i} makes no claims about $\secondhelper'$ except that it is distinct from all of $\alpha,\beta,\gamma,\delta,\firsthelper'$.
We will apply Claim~\ref{auxiliary coordinates} in a situation where we have fixed the last two coordinates of auxiliary vertices $\firsthelper$ and $\secondhelper$. Setting $d'=d-2$, the claim will enable us to choose the first $d-2$ coordinates, represented by $\firsthelper',\secondhelper'$, in a helpful way.

        \begin{proof}[Proof of Claim~\ref{auxiliary coordinates}]
            \begin{enumerate}[(i)]
                \item 
            Suppose first that there exists an $i \in [d']$ such that $\alpha'_i=\beta'_i$, wlog $i=1$. We choose $\firsthelper'_{1}=\alpha_{1}$, choose $\firsthelper'_2 \notin \{\alpha_2,\beta_2\}$ and $\firsthelper'_3 \notin \{\gamma_3,\delta_3\}$,
            so we have $\firsthelper \notin \{\alpha,\beta,\gamma,\delta\}$.
            For $\secondhelper'$ we choose any arbitrary vertex in $V_{d'} \setminus \{\alpha,\beta,\gamma,\delta,\firsthelper'\}$ -- we can do this since $|V_{d'}|=3^{d'} \geq 3^3 =27 > 5$.
            
            On the other hand, suppose there does not exist $i \in [d']$ such that $\alpha_i=\beta_i$. Then we choose $\firsthelper'_1=\alpha_1$, $\firsthelper'_2=\beta_2$ and $\firsthelper'_3 \notin \{\gamma_3,\delta_3\}$. Since we assumed that $\alpha$ and $\beta$ differ in every coordinate, the vertex $\firsthelper'$ is by definition not in $\{\alpha,\beta\}$. Again we choose $\secondhelper'$ to be any arbitrary vertex in $V_{d'} \setminus \{\alpha,\beta,\gamma,\delta,\firsthelper'\}$. 
           
            \item  We set $\firsthelper'_1=\alpha_1$, set $\secondhelper'_1 \neq \firsthelper'_1$, choose $\firsthelper'_2=\secondhelper'_2 \notin \{\alpha_2,\beta_2\}$ and $\firsthelper'_3=\secondhelper'_3 \notin \{\gamma_3,\delta_3\}$. It is elementary to verify that all the necessary conditions are satisfied.\qedhere
             \end{enumerate}
        \end{proof}

    \begin{proof}[Proof of Lemma~\ref{d-1aufd}]
        We adapt the proof of Proposition~$\ref{lem d-1aufd}$ to show that it is sufficient to assume $\ivsec{{d-1}}$ to prove $\ivfirst{d}$. Again we iterate through all possibilities
        of the type of the configuration, splitting at the last coordinate, and through all possibilities for the penultimate row.
        Then we choose our auxiliary vertices using Claim~$\ref{auxiliary coordinates}$ such that every configuration from $\menge{d-1}$ from the proof of Proposition~$\ref{lem d-1aufd}$ which must be covered by a loose path is in fact a configuration from $\mengeeingeschrankt{d-1}$. This means we have to choose the auxiliary vertices $\firsthelper$ and $\secondhelper$ such that every configuration used is of one of the types $\restriction{1}, \restriction{2}, \restriction{3}$ or $\restriction{4}$. We already determined the $(d-1)$st coordinate in the proof of Proposition~$\ref{lem d-1aufd}$, but we still have the freedom to choose the first $d-2$ coordinates.

In what follows below, for simplicity we abuse notation by writing, for example, $\startvx$ instead of $\startvx_{[d-1]}$. Note in particular that $\startvx\abr{s}_{[d-1]} = \startvx_{[d-1]}$ for any $s$.

        \noindent \textbf{Case 1: $\configuration$ is of type $\type{1}{d}$.}
        As before, we assume that the configuration $\configuration$ is of type  $\type{1}{d}$ split at the last coordinate and individually consider all the possibilities for the penultimate row $r_3,r_4,r_5,r_9,r_{10},r_{11},r_{12},r_{13}$ and $r_{14}$:
        \begin{enumerate}
            \item[$r_3$:]
        We used the $\type{1}{d-1}$-configurations $\firsthelper\secondhelper\firstskipvx\startvx$, $\startvx\firsthelper\secondskipvx\secondhelper$ and the $\type{4}{d-1}$-configuration $\secondhelper\evx\startvx\firsthelper$. For a $\type{1}{d-1}$-configuration to be of type $\restriction{1}$ the ending vertex (i.e.\ the second vertex listed in the configuration)
        must agree in at least one coordinate with at least one of the other vertices and for a $\type{4}{d-1}$-configuration to be of type $\restriction{4}$ the starting vertex has to agree in at least one coordinate with at least one of the other vertices. We therefore need $\secondhelper$ to agree with one of $\firsthelper,\firstskipvx,\startvx$ in a coordinate,
        $\firsthelper$ to agree with one of $\startvx,\secondskipvx,\secondhelper$ in a coordinate,
        and $\secondhelper$ to agree with one of $\evx,\startvx,\firsthelper$ in a coordinate.    
        Claim~\ref{auxiliary coordinates}~\eqref{ii} tells us that we can find $\firsthelper$ and $\secondhelper$ such that they agree with each other in two of the first $d-2$ coordinates, which already ensures that all three necessary conditions are fulfilled.
                \end{enumerate}

        We continue in this manner for the remaining possibilities for the penultimate row. For clarity we present our information in the following table:
        The first column gives the penultimate row, the second shows the configurations we use within each of the three layers, the third the types of these configurations.
        In the next three columns, we observe that to be of the appropriate type $\restriction{i}$,
        we need
        some vertices to agree with others in a certain number of coordinates. A comma in these columns should be understood as ``or'', so for example, if the $u_1,u_2$ appears in the fourth column and $u_3,u_4$ in the fifth, this means that one of $u_1,u_2$ must agree with one of $u_3,u_4$ in at least $n$ coordinates, where $n$ is the entry in the sixth column.
        Alternatively, if the entries are $u_1/u_2$ and $u_3/u_4$ in columns four and five, this means that either $u_1$ has to agree with $u_3$ in $n$ coordinates or $u_2$ has to agree with $u_4$ in $n$ coordinates. 
        Finally, the last column shows how we choose (the first $d-2$ coordinates of) the auxiliary vertices $\firsthelper,\secondhelper$ to agree with other vertices such that all necessary conditions are met. For example, with penultimate row $r_4$ we have two distinct entries in this final column, namely $(\firsthelper,\secondhelper)$ and $(\firsthelper,\startvx)$. This indicates that we choose $\firsthelper$ to agree with $\startvx$ in at least one coordinate and with $\secondhelper$ in at least one coordinate.

        \vspace{0.5cm}

        \begin{tabular}{|c|c|c||c|c|c||c|}
        \hline 
            Row & config & type & vx & agrees with & $n$ & pairs \\
        \hline
             $r_4$& $\firsthelper\secondhelper\firstskipvx\startvx$ & $\shorttype{1}$ & $\secondhelper$ & $\firsthelper,\firstskipvx,\startvx$ & $1$ & $(\firsthelper,\secondhelper)$ \\
             & $\secondhelper\evx\startvx\firsthelper$ & $\shorttype{4}$ & $\secondhelper$ & $\evx,\startvx,\firsthelper$ & $1$ & $(\firsthelper,\secondhelper)$\\
             & $\startvx\firsthelper\secondskipvx\secondhelper$ & $\shorttype{4}$ & $\startvx$ & $\firsthelper,\secondskipvx,\secondhelper$ & $1$ & $(\firsthelper,\startvx)$\\
        \hline
            $r_5$ & $\firsthelper\secondhelper\firstskipvx\startvx$ & $\shorttype{4}$ & $\firsthelper$ & $\secondhelper,\firstskipvx,\startvx$ & $1$ & $(\firsthelper,\secondhelper)$ \\
                  & $\secondhelper\evx\startvx\firsthelper$ & $\shorttype{4}$ & $\secondhelper$ & $\evx,\startvx,\firsthelper$ & $1$ & $(\firsthelper,\secondhelper)$ \\
                  & $\startvx\firsthelper\secondskipvx\secondhelper$ & $\shorttype{4}$ & $\startvx$ & $\firsthelper,\secondskipvx,\secondhelper$ & $1$ & $(\firsthelper,\startvx)$ \\
        \hline
            $r_9$ & $\firsthelper\secondhelper\firstskipvx\startvx$ & $\shorttype{4}$ & $\firsthelper$ & $\secondhelper, \firstskipvx,\startvx$ & $1$ & $(\firsthelper,\secondhelper)$ \\
                  & $\secondhelper\evx\startvx\firsthelper$ & $\shorttype{2}$ & $\secondhelper,\evx$ & $\startvx,\firsthelper$ & $2$ & $(\firsthelper,\secondhelper)$ \\
                  & $\startvx\firsthelper\secondskipvx\secondhelper$ & $\shorttype{1}$ & $\firsthelper$ & $\startvx,\secondskipvx,\secondhelper$ & $1$ & $(\firsthelper,\secondhelper)$\\
        \hline
            $r_{10}$ & $\firsthelper\secondhelper\firstskipvx\startvx$ & $\shorttype{3}$ & $\firsthelper/\secondhelper$ & $\startvx/\firstskipvx$ & $1$ & $(\firsthelper,\startvx)$\\
                     & $\secondhelper\evx\startvx\firsthelper$ & $\shorttype{3}$& $\secondhelper/\evx$ & $\firsthelper/\startvx$ & $1$ & $(\firsthelper,\secondhelper)$\\
                     & $\startvx\firsthelper\secondhelper\secondskipvx$ & $\shorttype{3}$ & $\startvx/\firsthelper$ & $\secondskipvx/\secondhelper$ & $1$ & $(\firsthelper,\secondhelper)$\\
        \hline
            $r_{11}$ & $\firsthelper\secondhelper\firstskipvx\startvx$ & $\shorttype{4}$ & $\firsthelper$ & $           \secondhelper,\firstskipvx,\startvx$ & $1$ & $(\firsthelper,\secondhelper)$ \\
                     & $\secondhelper\evx\startvx\firsthelper$ & $\shorttype{2}$ & $\secondhelper,\evx$ & $\startvx,\firsthelper$ & $2$ & $(\firsthelper,\secondhelper)$ \\
                     & $\startvx\firsthelper\secondskipvx\secondhelper$ & $\shorttype{4}$ & $\startvx$ & $\firsthelper,\secondskipvx,\secondhelper$ & $1$ & $(\firsthelper,\startvx)$ \\
        \hline
            $r_{12}$ & $\firsthelper\secondhelper\firstskipvx\startvx$ & $\shorttype{1}$ & $\secondhelper$ & $\firsthelper,\firstskipvx,\startvx$ & $1$ & $(\firsthelper,\secondhelper)$ \\
                     & $\secondhelper\evx\startvx\firsthelper$ & $\shorttype{2}$ & $\secondhelper,\evx$ & $\startvx,\firsthelper$ & $2$ & $(\firsthelper,\secondhelper)$ \\
                     & $\startvx\firsthelper\secondhelper\secondskipvx$ & $\shorttype{3}$ & $\startvx/\firsthelper$ & $\secondskipvx/\secondhelper$ & $1$ & $(\firsthelper,\secondhelper)$\\
        \hline
            $r_{13}$ & $\firsthelper\secondhelper\startvx\firstskipvx$ & $\shorttype{4}$ & $\firsthelper$ & $         \secondhelper,\startvx,\firstskipvx$ & $1$ & $(\firsthelper,\secondhelper)$ \\
                     & $\secondhelper\evx\startvx\firsthelper$ & $\shorttype{3}$& $\secondhelper/\evx$ & $\firsthelper/\startvx$ & $1$ & $(\firsthelper,\secondhelper)$\\
                     & $\startvx\firsthelper\secondhelper\secondskipvx$ & $\shorttype{3}$ & $\startvx/\firsthelper$ & $\secondskipvx/\secondhelper$ & $1$ & $(\firsthelper,\secondhelper)$\\
        \hline
            $r_{14}$ & $\firsthelper\secondhelper\firstskipvx\startvx$ & $\shorttype{1}$ & $\secondhelper$ &            $\firsthelper,\firstskipvx,\startvx$ & $1$ & $(\firsthelper,\secondhelper)$ \\
                     & $\secondhelper\evx\startvx\firsthelper$ & $\shorttype{2}$ & $\secondhelper,\evx$ & $\startvx,\firsthelper$ & $2$ & $(\firsthelper,\secondhelper)$ \\
                     & $\startvx\firsthelper\secondskipvx\secondhelper$ & $\shorttype{4}$ & $\startvx$ & $\firsthelper,\secondskipvx,\secondhelper$ & $1$ & $(\firsthelper,\startvx)$ \\
        \hline
        \end{tabular}

\vspace{0.5cm}
        
        \noindent \textbf{Case 2: $\configuration$ is of type $\type{2}{d}$.}
        We follow the same principle as in case 1 -- the relevant table is shown below.

\vspace{0.5cm}
        
        \begin{tabular}{|c|c|c||c|c|c||c|}
        \hline 
            Row & config & type & vx & agrees with & $n$ & pairs \\
            \hline
             $r_2$& $\secondhelper\evx\startvx\firsthelper$ & $\shorttype{4}$ & $\secondhelper$ & $\evx,\startvx,\firsthelper$ & $1$ & $(\firsthelper,\secondhelper)$ \\
             & $\startvx\firsthelper\firstskipvx\secondhelper$ & $\shorttype{1}$ & $\firsthelper$ & $\startvx,\firstskipvx,\secondhelper$ & $1$ & $(\firsthelper,\secondhelper)$\\
             & $\firsthelper\secondhelper\secondskipvx\startvx$ & $\shorttype{4}$ & $\firsthelper$ & $\secondhelper,\secondskipvx,\startvx$ & $1$ & $(\firsthelper,\secondhelper)$\\
             \hline
             $r_4$& $\secondhelper\evx\startvx\firsthelper$ & $\shorttype{4}$ & $\secondhelper$ & $\evx,\startvx,\firsthelper$ & $1$ & $(\firsthelper,\secondhelper)$ \\
             & $\startvx\firsthelper\firstskipvx\secondhelper$ & $\shorttype{4}$ & $\startvx$ & $\firsthelper,\firstskipvx,\secondhelper$ & $1$ & $(\firsthelper,\startvx)$\\
             & $\firsthelper\secondhelper\secondskipvx\startvx$ & $\shorttype{1}$ & $\secondhelper$ & $\firsthelper,\secondskipvx,\startvx$ & $1$ & $(\firsthelper,\secondhelper)$\\
             \hline
             $r_5$& $\secondhelper\evx\startvx\firsthelper$ & $\shorttype{4}$ & $\secondhelper$ & $\evx,\startvx,\firsthelper$ & $1$ & $(\firsthelper,\secondhelper)$ \\
             & $\startvx\firsthelper\firstskipvx\secondhelper$ & $\shorttype{4}$ & $\startvx$ & $\firsthelper,\firstskipvx,\secondhelper$ & $1$ & $(\firsthelper,\startvx)$\\
             & $\firsthelper\secondhelper\secondskipvx\startvx$ & $\shorttype{4}$ & $\firsthelper$ & $\secondhelper,\secondskipvx,\startvx$ & $1$ & $(\firsthelper,\secondhelper)$\\
             \hline
             $r_7$& $\secondhelper\evx\startvx\firsthelper$ & $\shorttype{2}$ & $\secondhelper,\evx$ & $\startvx,\firsthelper$ & $2$ & $(\firsthelper,\secondhelper)$ \\
             & $\startvx\firsthelper\firstskipvx\secondhelper$ & $\shorttype{1}$ & $\firsthelper$ & $\startvx,\firstskipvx,\secondhelper$ & $1$ & $(\firsthelper,\secondhelper)$\\
             & $\firsthelper\secondhelper\secondskipvx\startvx$ & $\shorttype{4}$ & $\firsthelper$ & $\secondhelper,\secondskipvx,\startvx$ & $1$ & $(\firsthelper,\secondhelper)$\\
             \hline
             $r_{10}$& $\secondhelper\evx\startvx\firsthelper$ & $\shorttype{3}$ & $\secondhelper/\evx$ & $\firsthelper/\startvx$ & $1$ & $(\firsthelper,\secondhelper)$ \\
             & $\startvx\firsthelper\secondhelper\firstskipvx$ & $\shorttype{3}$ & $\startvx/\firsthelper$ & $\firstskipvx/\secondhelper$ & $1$ & $(\firsthelper,\secondhelper)$\\
             & $\firsthelper\secondhelper\secondskipvx\startvx$ & $\shorttype{3}$ & $\firsthelper,\secondhelper$ & $\secondskipvx,\startvx$ & $1$ & $(\firsthelper,\secondskipvx)$\\
             \hline
             $r_{13}$& $\secondhelper\evx\startvx\firsthelper$ & $\shorttype{2}$ & $\secondhelper,\evx$ & $\startvx,\firsthelper$ & $2$ & $(\firsthelper,\secondhelper)$ \\
            &  $\startvx\firsthelper\firstskipvx\secondhelper$ & $\shorttype{4}$ & $\startvx$ & $\firsthelper,\firstskipvx,\secondhelper$ & $1$ & $(\firsthelper,\startvx)$\\
             & $\firsthelper\secondhelper\secondskipvx\startvx$ & $\shorttype{4}$ & $\firsthelper$ & $\secondhelper,\secondskipvx,\startvx$ & $1$ & $(\firsthelper,\secondhelper)$\\
             \hline
             $r_{14}$& $\secondhelper\evx\startvx\firsthelper$ & $\shorttype{2}$ & $\secondhelper,\evx$ & $\startvx,\firsthelper$ & $2$ & $(\firsthelper,\secondhelper)$ \\
             & $\startvx\firsthelper\firstskipvx\secondhelper$ & $\shorttype{4}$ & $\startvx$ & $\firsthelper,\firstskipvx,\secondhelper$ & $1$ & $(\firsthelper,\startvx)$\\
             & $\firsthelper\secondhelper\secondskipvx\startvx$ & $\shorttype{1}$ & $\secondhelper$ & $\firsthelper,\secondskipvx,\startvx$ & $1$ & $(\firsthelper,\secondhelper)$\\
        \hline
        \end{tabular}

        \vspace{0.5cm}
        
        \noindent \textbf{Case 3: $\configuration$ is of type $\type{3}{d}$.}
        Case 3 is slightly different from the previous two cases in that one layer is covered by a loose Hamilton path, so we do not have a configuration type to worry about in this layer. 
        Thus for each possible penultimate row, we have only two configuration types in the table below. We now have to iterate through the possibilities for the penultimate row $r_3,r_4,r_9,r_{10},r_{13}$ or $r_{14}$.
        
        We observe that for $r_9$ we used exactly the same types of configuration within layers as for $r_4$, with only permutations of the values of coordinate $d-1$ (though different permutations for each layer). We can thus argue for $r_9$ exactly as for $r_4$. Similarly, the argument for $r_{10}$ is identical to that for $r_3$.
        
        This leaves us with the rows $r_3,r_4,r_{13},r_{14}$. Rows $r_3,r_4$ are the only rows from any of the three cases in this proof for which we cannot use Claim~\ref{auxiliary coordinates}~\eqref{ii}. For both rows, we apply Claim~\ref{auxiliary coordinates}~\eqref{i} with $(\alpha,\beta,\gamma,\delta):=(\startvx_{[d-2]},\evx_{[d-2]},\firstskipvx_{[d-2]},\secondskipvx_{[d-2]})$, obtain $\firsthelper',\secondhelper'$ and set $\firsthelper{[d-2]}:= \firsthelper'$ and $\secondhelper{[d-2]}:= \secondhelper'$.
        Now by the statement of the claim, $\firsthelper$ agrees with each of $\startvx$ and $\evx$ in one of the first $d-2$ coordinates.
        This ensures that our configurations are of types $\restriction{1},\restriction{3}$ or $\restriction{4}$ as appropriate.

\vspace{0.5cm}

        \begin{tabular}{|c|c|c||c|c|c||c|}
        \hline 
            Row & config & type & vx & agrees with & $n$ & pairs \\
        \hline
            $r_3$& $\startvx\firsthelper\secondhelper\firstskipvx$ & $\shorttype{4}$ & $\startvx$ & $\firsthelper,\secondhelper,\firstskipvx$ & $1$ & $(\firsthelper,\startvx)$ \\
             & $\secondhelper\evx\firsthelper\secondskipvx$ & $\shorttype{3}$ & $\secondhelper/\evx$ & $\secondskipvx/\firsthelper$ & $1$ & $(\firsthelper,\evx)$\\
             \hline 
             $r_4$& $\startvx\firsthelper\secondhelper\firstskipvx$ & $\shorttype{4}$ & $\startvx$ & $\firsthelper,\secondhelper,\firstskipvx$ & $1$ & $(\firsthelper,\startvx)$ \\
             & $\secondhelper\evx\firsthelper\secondskipvx$ & $\shorttype{1}$ & $\evx$ & $\secondhelper,\firsthelper,\secondskipvx$ & $1$ & $(\firsthelper,\evx)$\\
             \hline 
             $r_{13}$& $\startvx\firsthelper\secondhelper\firstskipvx$ & $\shorttype{3}$ & $\startvx/\firsthelper$ & $\firstskipvx/\secondhelper$ & $1$ & $(\firsthelper,\secondhelper)$ \\
             & $\secondhelper\evx\secondskipvx\firsthelper$ & $\shorttype{4}$ & $\secondhelper$ & $\evx,\secondskipvx,\firsthelper$ & $1$ & $(\firsthelper,\secondhelper)$\\
             \hline 
             $r_{14}$& $\startvx\firsthelper\secondhelper\firstskipvx$ & $\shorttype{5}$ & $\startvx,\firsthelper$ & $\secondhelper,\firstskipvx$ & $2$ &  $(\firsthelper,\secondhelper)$\\
             & $\secondhelper\evx\firsthelper\secondskipvx$ & $\shorttype{2}$ & $\secondhelper,\evx$ & $\firsthelper, \secondskipvx$ & $2$ & $(\firsthelper,\secondhelper)$\\
        \hline
        \end{tabular}

        \vspace{0.5cm}

This completes the proof of Lemma~\ref{d-1aufd}.
    \end{proof}

\section{Concluding remarks}\label{sec: concluding remarks}

The themes addressed in this paper raise a number of open questions for further research.
In particular, one could ask about the existence of loose Hamilton paths in cube hypergraphs of higher uniformities, i.e.\ in $\kuniformcube{d}$ for $k \ge 4$. Note that $\kuniformcube{d}$ contains $k^d$ vertices while a loose path of length $\ell:= k^{d-1}+k^{d-2}+\ldots+1$ contains $1+\ell(k-1) = k^d$ vertices, so the natural necessary divisibility condition for the existence of a loose Hamilton path is certainly satisfied.

Similar considerations to the case $k=3$ readily show that, when $k\ge 4$, there is a loose Hamilton path (trivially) for $d=1$ but no such path for $d=2,3$. It seems likely that the larger $k$ becomes, the larger the dimension $d$ required before a loose Hamilton path exists.

\begin{quest}\ 
\begin{enumerate}
    \item Given $k \in \mathbb{N}$, does there exist a $d_0=d_0(k)\ge 2$ such that $\kuniformcube{d_0}$ contains a loose Hamilton path?
    \item Does $d_0$ tend to infinity with $k$?
    \item Is it even true that $\kuniformcube{d}$ contains a loose Hamilton path for every $d\ge d_0$?
\end{enumerate}
\end{quest}

It seems plausible that the answer to all of these questions is yes, but proving this would require new ideas.

In the introduction we mentioned several papers which investigated the conjecture of
Ruskey and Savage~\cite{RS93}
that in the (graph) hypercube $Q_d$, any matching can be extended to a Hamilton cycle, and which proved partial results in this direction~\cite{AAAHST15,Fink07,Gregor09}.
The analogous statement for loose Hamilton paths in cube hypergraphs is easily seen to be false, even for dimensions in which these exist: cube hypergraphs contain perfect matchings, but loose Hamilton paths do not. However, one could place restrictions either on the matching (its size and structure) or on the number of edges of the matching which must be used. A $3/4$ proportion is an easy asymptotic (in $d$) upper bound on the proportion which can plausibly be guaranteed to be used in $\cube{d}$, since a perfect matching contains $3^{d-1}$ edges and the largest matching in a LHP contains $\lceil\frac{3^d-1}{4}\rceil$ edges.

One can also ask further questions that have also been addressed for graphs, for instance about the number of loose Hamilton paths, or their robustness against edge deletion.

One can also consider other possible generalisations of paths and cycles to hypergraphs, as was done in~\cite{JMthesis}. Here it was determined for exactly which dimensions $d$ the cube hypergraph $\cube{d}$ contains a Berge Hamilton cycle, or a $2$-vertex-connected $2$-factor. The latter of these two results also generalises naturally to $\kuniformcube{d}$, but in general it is still an open question whether Berge Hamilton cycles exist in $\kuniformcube{d}$.

\newpage
\bibliographystyle{plain}
\bibliography{main.bib}
\appendix
\section{Pseudo-code} \label{app:code}

In this appendix we describe the computer code used to check that all normalised configurations of four vertices in $\cube{4}$ except the four described in Section~\ref{sec: base case} can be covered by loose paths. The code itself can be found at \cite{code}.

We will describe the algorithms using pseudo-code, in which our notation will follow that of the paper rather than that in the code files.
We denote by $N$ the set of all normalised configurations of four vertices and by \textit{\lpskipped} a loose path containing all but two vertices (so an \lpskipped\ will cover an appropriate configuration of four vertices).

The code is divided into the three programs \allnc, \coveredc\ and \notcoveredc.
\begin{enumerate}[(i)]
    \item \text{\allnc} creates the set \textit{\setallnc} of all normalised configurations of four vertices.
    \item \text{\coveredc} takes as input a set $P$ of (encoded) paths. First, it verifies that these are indeed {\lpskipped}s, then it checks which configurations of four vertices they cover and writes these configurations to a set $\setallcoveredc$. 

    \item \notcoveredc\ generates the set $\setallnotcoveredc:= \setallnc \setminus \setallcoveredc $.
\end{enumerate}

The main difficulty was to create the set $P$, which was done via a computer search for {\lpskipped}s. We do not include a description of this program here since it is not necessary in order to check the correctness of the proof -- the code used to create the set $P$ can be found in  \cite{code}. \\\\

\subsection{\allnc}
Since runtime complexity is not an issue, our primary focus was transparency rather than efficiency. First, we start with the set $\setallnc$ being empty. Then, we iterate through all possible $(\startvx,\evx,\firstskipvx,\secondskipvx)$ in $\{0,1,2\}^4$. According to Section \ref{sec:normalisation}, we discard the configuration if the vertices are not distinct. Then we normalise the configuration $(\startvx,\evx,\firstskipvx,\secondskipvx)$ and add it to $\setallnc$ if it is not already present. Thus when the program terminates, the set $\setallnc$ is the set of all normalised configurations. 

\begin{algorithmic}
\State $\setallnc := \emptyset$
\State $\tilde{\setallnc}:=\{0,1,2\}^4$
\ForAll{$\startvx,\evx,\firstskipvx,\secondskipvx$ {in} $\Tilde{N}$}
\If{$\startvx,\evx,\firstskipvx,\secondskipvx$ {are distinct}}
\State $c := (\startvx,\evx,\firstskipvx,\secondskipvx)$ normalised \Comment{The normalisation algorithm works analogously to Section~\ref{sec:normalisation}}
\If{$c$ {is not already in} $N$}
\State add $c$ to $\setallnc$
\EndIf
\EndIf
\EndFor
\end{algorithmic}

\subsection{\coveredc}
We import the set $P$, which is a set of tuples, with each tuple consisting of 81 vertices. The first 79 vertices represent the vertices of a loose path, while the last two vertices are two additional (omitted) vertices.
We introduce the empty set $\setallcoveredc$ and then iterate through all {\lpskipped}s in $P$. If a path is an {\lpskipped} (it always is, for our choice of $P$), we add its configuration to $\setallcoveredc$. Thus, in the end $\setallcoveredc$ only contains covered configurations.

\begin{algorithmic}
\State Import $P$
\State $\setallcoveredc = \emptyset$
\ForAll{$p$ in $P$}
\If{$p$ is an \lpskipped}
\State  Add the configuration $c$ of $p$ to $\setallcoveredc$
\EndIf
\EndFor
\end{algorithmic}
\subsection{\notcoveredc}
We introduce $\setallnotcoveredc$ as the empty set and then iterate through all configurations in $\setallnc$. If a configuration is not in $\setallcoveredc$, we add it to $\setallnotcoveredc$. Thus, in the end $\setallnotcoveredc$ only contains configurations, for which we do not know an {\lpskipped}. 

\begin{algorithmic}
\State $\setallnotcoveredc= \emptyset$
\ForAll{$c$ {in} $\setallnc$}
\If{$c$ {is not in} $\setallcoveredc$}
\State add $c$ to $\setallnotcoveredc$
\EndIf
\EndFor
\end{algorithmic}
\end{document}